\numberwithin{equation}{section}
\theoremstyle{plain}
\newtheorem{theorem}{Theorem}
\newtheorem{definition}[theorem]{Definition}
\newtheorem{lemma}[theorem]{Lemma}
\newtheorem{proposition}[theorem]{Proposition}
\newtheorem{remark}[theorem]{Remark}
\newcommand{\one}{{{\rm 1\mkern-1.5mu}\!{\rm I}}}
\begin{document}

\title[Averaged Large Deviations for RWRE]{Averaged Large Deviations for \\Random Walk in a Random Environment}
\author{Atilla Yilmaz}
\address
{Weizmann Institute of Science\newline
\indent Department of Mathematics\newline
\indent Rehovot 76100\newline
\indent ISRAEL}
\email{atilla.yilmaz@weizmann.ac.il}
\urladdr{http://www.wisdom.weizmann.ac.il/$\sim$yilmaz/}
\date{September 19, 2008. \textit{Revised:} May 3, 2009}
\subjclass[2000]{60K37, 60F10, 82C44.}
\keywords{Disordered media, rare events, rate function, regeneration times.}

\thanks{This research was supported partially by a grant from the National Science Foundation: DMS 0604380.}

\maketitle

\begin{abstract}
In his 2003 paper, Varadhan proves the averaged large deviation principle for the mean velocity of a particle taking a nearest-neighbor random walk in a uniformly elliptic i.i.d.\ environment on $\mathbb{Z}^d$ with $d\geq1$, and gives a variational formula for the corresponding rate function $I_a$. Under Sznitman's transience condition (T), we show that $I_a$ is strictly convex and analytic on a non-empty open set $\mathcal{A}$, and that the true velocity of the particle is an element (resp.\ in the boundary) of $\mathcal{A}$ when the walk is non-nestling (resp.\ nestling). We then identify the unique minimizer of Varadhan's variational formula at any velocity in $\mathcal{A}$.
\end{abstract}

\section{Introduction}

\subsection{The model}

The random motion of a particle on $\mathbb{Z}^d$ can be modeled by a discrete time Markov chain. Write $\pi(x,x+z)$ for the transition probability from $x$ to $x+z$ for each $x,z\in\mathbb{Z}^d$, and refer to $\omega_x:=(\pi(x,x+z))_{z\in\mathbb{Z}^d}$ as the ``environment" at $x$. If the environment $\omega:=(\omega_x)_{x\in\mathbb{Z}^d}$ is sampled from a probability space $(\Omega,\mathcal{B},\mathbb{P})$, then the particle is said to take a  ``random walk in a random environment" (RWRE). Here, $\mathcal{B}$ is the Borel $\sigma$-algebra corresponding to the product topology.

Let $U:=\{(z_1,\ldots,z_d)\in\mathbb{Z}^d:|z_1|+\cdots+|z_d|=1\}$. For each $z\in U$, define the shift $T_z$ on $\Omega$ by $\left(T_z\omega\right)_x=\omega_{x+z}$. Assume that $\mathbb{P}$ is stationary and ergodic under $\left(T_z\right)_{z\in U}$,
\begin{equation}\label{nearestneighbor}
\mbox{$\mathbb{P}\{\pi(0,z)=0\}=1$ unless $z\in U$ (i.e., the walk is nearest-neighbor), and}
\end{equation}
\begin{equation}\label{ellipticity}
\mbox{$\exists\,\kappa>0$ such that $\mathbb{P}\{\pi(0,z)\geq\kappa\}=1$ for every $z\in U$. (This is called uniform ellipticity.)}
\end{equation}

For any $x\in\mathbb{Z}^d$ and $\omega\in\Omega$, the Markov chain with transition probabilities given by $\omega$ induces a probability measure $P_x^\omega$ on the space of paths starting at $x$. Statements about $P_x^\omega$ that hold for $\mathbb{P}$-a.e.\ $\omega$ are referred to as ``quenched". Statements about the semi-direct product $P_x:=\mathbb{P}\times P_x^\omega$ are referred to as ``averaged". Expectations under $\mathbb{P}, P_x^\omega$ and $P_x$ are denoted by $\mathbb{E}, E_x^\omega$ and $E_x$, respectively.

Because of the extra layer of randomness in the model, the standard questions of recurrence vs.\ transience, the law of large numbers (LLN), the central limit theorem (CLT) and the large deviation principle (LDP) --- which have well known answers for classical random walk --- become hard. However, it is possible by taking the ``point of view of the particle" to treat the two layers of randomness as one: If we denote the random path of the particle by $X:=(X_n)_{n\geq0}$, then $(T_{X_n}\omega)_{n\geq0}$ is a Markov chain (referred to as ``the environment Markov chain") on $\Omega$ with transition kernel $\overline{\pi}$ given by \[\overline{\pi}(\omega,\omega'):=\sum_{z:\,T_z\omega=\omega'}\pi(0,z).\] This is a standard approach in the study of random media. See for example \cite{DeMasi}, \cite{KV}, \cite{Kozlov}, \cite{Olla} or \cite{PV}.

See \cite{Sznitman} or \cite{Zeitouni} for a general survey of results on RWRE.

\subsection{Survey of results on quenched large deviations}

Recall that a sequence $\left(Q_n\right)_{n\geq1}$ of probability measures on a topological space $\mathbb{X}$ satisfies the LDP with rate function $I:\mathbb{X}\to\mathbb{R}^+\cup\{0\}\cup\{\infty\}$ if $I$ is lower semicontinuous, not identically infinite, and for any measurable set $G$, $$-\inf_{x\in G^o}I(x)\leq\liminf_{n\to\infty}\frac{1}{n}\log Q_n(G)\leq\limsup_{n\to\infty}\frac{1}{n}\log Q_n(G)\leq-\inf_{x\in\bar{G}}I(x).$$ Here, $G^o$ denotes the interior of $G$, and $\bar{G}$ its closure. See \cite{DZ} for general background and definitions regarding large deviations.

In the case of nearest-neighbor RWRE on $\mathbb{Z}$, Greven and den Hollander \cite{GdH} assume that $\mathbb{P}$ is a product measure, and prove

\begin{theorem}[Quenched LDP]\label{qLDPgeneric}
For $\mathbb{P}$-a.e.\ $\omega$, $\left(P_o^\omega\left(\frac{X_n}{n}\in\cdot\,\right)\right)_{n\geq1}$ satisfies the LDP with a deterministic and convex rate function $I_q$.
\end{theorem}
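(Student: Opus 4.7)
The plan is to reduce the LDP for $X_n/n$ to an LDP for the first-passage times
\[
\tau_k := \inf\{n \geq 0 : X_n = k\}, \qquad k \in \mathbb{Z},
\]
by exploiting the one-dimensional nearest-neighbor identity $\{X_n \geq k\} = \{\tau_k \leq n\}$. Throughout, the hypothesis that $\mathbb{P}$ is a product measure is essential.

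First, for positive velocities I would analyze the quenched Laplace transform $\phi_\lambda(\omega) := E_0^\omega[e^{-\lambda \tau_1}]$ for $\lambda \geq 0$. Conditioning on the first step out of $0$ yields the functional equation
\[
\phi_\lambda(\omega) \;=\; \frac{e^{-\lambda}\,\pi(0,1)}{1 - e^{-\lambda}\,\pi(0,-1)\,\phi_\lambda(T_{-1}\omega)},
\]
in which, by the product structure of $\mathbb{P}$, the random variable $\phi_\lambda(T_{-1}\omega)$ is $\sigma((\omega_y)_{y\leq -1})$-measurable and therefore independent of $\omega_0$. A fixed-point/iteration argument based on this recursion (which is essentially the continued-fraction representation familiar from one-dimensional RWRE) identifies $\phi_\lambda$ as a well-defined random variable, stationary under $T_1$ and with $\mathbb{E}|\log\phi_\lambda| < \infty$ on an appropriate range of $\lambda$.

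Next, by the strong Markov property, $E_0^\omega[e^{-\lambda\tau_n}] = \prod_{k=0}^{n-1}\phi_\lambda(T_k\omega)$, so Birkhoff's ergodic theorem yields, for $\mathbb{P}$-a.e.\ $\omega$,
\[
\Lambda(\lambda) \;:=\; \lim_{n\to\infty}\frac{1}{n}\log E_0^\omega\!\left[e^{-\lambda\tau_n}\right] \;=\; \mathbb{E}[\log\phi_\lambda].
\]
This $\Lambda$ is deterministic and convex; differentiability and steepness on the interior of its domain can be extracted from the smoothness of the recursion. G\"artner--Ellis then delivers a quenched LDP for $\tau_n/n$ with a convex deterministic rate $J_q$, which inverts via $I_q(v) = v\,J_q(1/v)$ to a quenched LDP for $X_n/n$ on positive velocities. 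The argument for $v<0$ is symmetric.

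The main obstacle is to glue these two half-line LDPs into a single rate function on $[-1,1]$ and to pin down $I_q$ at $v=0$. In a transient regime one must control $\{X_n/n \to 0\}$ by the one-sided limits $\lim_{v\to 0^\pm} I_q(v)$, while in a recurrent regime one has to rule out exponential decay at $v=0$ using the quenched LLN and the ellipticity assumption \eqref{ellipticity}. Verifying the steepness hypothesis needed to upgrade the G\"artner--Ellis upper bound to a matching lower bound on the full interior is also delicate. I expect this gluing step, together with the regularity analysis of $\phi_\lambda$, to absorb the bulk of the technical work; the rest is standard machinery.
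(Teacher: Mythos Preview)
The paper does not supply its own proof of Theorem~\ref{qLDPgeneric}; it is quoted as a result from the literature. As the surrounding discussion explains, the original proof (Greven--den Hollander, for i.i.d.\ environments on $\mathbb{Z}$) proceeds via an \emph{auxiliary branching process built from the excursions of the walk}, not via hitting-time Laplace transforms. The argument you outline---compute the quenched logarithmic moment generating function of the first-passage times $\tau_n$ via the recursion for $\phi_\lambda$, apply Birkhoff to get a deterministic limit, invoke G\"artner--Ellis, and then invert $\tau_n/n \leftrightarrow X_n/n$---is precisely the alternative route that the paper attributes to Comets--Gantert--Zeitouni.

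Your sketch is therefore a correct high-level summary of a known proof, just not the one the theorem is credited to here. Two remarks on the comparison. First, the branching-process approach of Greven--den Hollander genuinely uses the product structure of $\mathbb{P}$, whereas the passage-time/G\"artner--Ellis route does not: the only place you invoke independence is in the fixed-point analysis of $\phi_\lambda$, but the key step---the a.s.\ limit $\Lambda(\lambda)=\mathbb{E}[\log\phi_\lambda]$---follows from Birkhoff and needs only ergodicity. So your claim that the product hypothesis is ``essential'' undersells the method; this is exactly what allowed Comets--Gantert--Zeitouni to extend the result to stationary ergodic environments. Second, you are right that the delicate point is the essential-smoothness/steepness verification for G\"artner--Ellis and the gluing at $v=0$; that is where the real work in the Comets--Gantert--Zeitouni paper lies, and your proposal correctly flags it without resolving it.
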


\noindent They provide a formula for $I_q$ and show that its graph typically has flat pieces. Their proof makes use of an auxiliary branching process formed by the excursions of the walk. By a completely different technique, Comets, Gantert and Zeitouni \cite{CGZ} extend the results in \cite{GdH} to stationary and ergodic environments. Their argument involves first proving a quenched LDP for the passage times of the walk by an application of the G\"artner-Ellis theorem, and then inverting this to get the desired LDP for the mean velocity.

For $d\geq1$, the first result on quenched large deviations is given by Zerner \cite{Zerner}. He uses a subadditivity argument for certain passage times to prove Theorem \ref{qLDPgeneric} in the case of ``nestling" walks in product environments. 

\begin{definition}
RWRE is said to be non-nestling relative to a unit vector $\hat{u}\in\mathcal{S}^{d-1}$ if
\begin{equation}\label{nonmumu}
\mathrm{ess}\inf_{\mathbb{P}}\sum_{z\in U}\pi(0,z)\langle z,\hat{u}\rangle>0.
\end{equation}
It is said to be nestling if it is not non-nestling relative to any unit vector. In the latter case, the convex hull of the support of the law of $\sum_{z}\pi(0,z)z$ contains the origin.
\end{definition}

By a more direct use of the subadditive ergodic theorem, Varadhan \cite{Raghu} drops the nestling assumption and generalizes Zerner's result to stationary and ergodic environments. The drawback of these approaches is that they don't lead to any formula for the rate function. 

Rosenbluth \cite{jeffrey} takes the point of view of the particle and gives an alternative proof of Varadhan's result. Moreover, he provides a variational formula for the rate function $I_q$. Using the same techniques, we prove in \cite{YilmazQuenched} a quenched LDP for the pair empirical measure of the environment Markov chain. This implies Rosenbluth's result by an appropriate contraction. In the same work, we also propose an Ansatz for the minimizer of the variational formula for $I_q$. We then verify this Ansatz for walks on $\mathbb{Z}$ with bounded steps.

\subsection{Previous results on averaged large deviations}

In their aforementioned paper concerning RWRE on $\mathbb{Z}$, Comets et al.\ \cite{CGZ} prove also

\begin{theorem}[Averaged LDP]\label{aLDPgeneric}
$\left(P_o\left(\frac{X_n}{n}\in\cdot\,\right)\right)_{n\geq1}$ satisfies the LDP with a convex rate function $I_a$.
\end{theorem}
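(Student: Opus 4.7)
Plan. Follow the one-dimensional paradigm used by the same authors for the quenched case: establish an LDP for passage times and then invert via the nearest-neighbor duality on $\mathbb{Z}$. For $k\geq 1$, set $H_k:=\inf\{n\geq 0:X_n=k\}$. Nearest-neighbor motion gives the exact identity $P_o(X_n\geq k)=P_o(H_k\leq n)$, so an LDP for $(H_k/k)_{k\geq 1}$ on $[1,\infty]$ with convex rate function $J$ transfers to an LDP for $(X_n/n)_{n\geq 1}$ on $(0,1]$ via the perspective formula $I_a(v)=vJ(1/v)$, whose convexity is inherited from that of $J$. Velocities $v\in[-1,0)$ are handled by the analogous construction with the hitting times of the negative integers; $|v|>1$ is excluded by the step restriction~(\ref{nearestneighbor}).

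The core analytic task is to show that for every $\lambda\geq 0$ the averaged log-moment generating function
\begin{equation*}
\Lambda(\lambda):=\lim_{k\to\infty}\frac{1}{k}\log E_o\!\left[e^{-\lambda H_k}\right]
\end{equation*}
exists and is essentially smooth on the interior of its effective domain. Quenched, the strong Markov property at successive hits of the positive integers yields the product representation
\begin{equation*}
E_o^\omega\!\left[e^{-\lambda H_k}\right]=\prod_{j=0}^{k-1}\phi_\lambda(T_j\omega),
\end{equation*}
where $\phi_\lambda(\omega):=E_o^\omega[e^{-\lambda H_1}]$ is the minimal $[0,1]$-valued solution of the one-step recursion
\begin{equation*}
\phi_\lambda(\omega)=e^{-\lambda}\pi(0,1)+e^{-\lambda}\pi(0,-1)\,\phi_\lambda(T_{-1}\omega)\,\phi_\lambda(\omega).
\end{equation*}
Uniform ellipticity~(\ref{ellipticity}) keeps $\phi_\lambda$ bounded away from $\{0,1\}$ on compact $\lambda$-sets, and Birkhoff's theorem applied to $\log\phi_\lambda$ immediately gives the \emph{quenched} limit $\mathbb{E}[\log\phi_\lambda]$. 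The \emph{averaged} limit $\Lambda(\lambda)$ then requires a finer analysis of the shift-stationary multiplicative cocycle $\prod_{j<k}\phi_\lambda\circ T_j$, which is the route taken in \cite{CGZ} and combines the recursion above with ergodicity of $\mathbb{P}$ under $(T_z)_{z\in U}$.

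Once $\Lambda$ is in hand with the necessary convexity and smoothness, the G\"artner--Ellis theorem delivers the LDP for $(H_k/k)_{k\geq 1}$ with convex rate $J=\Lambda^{\ast}$, and the perspective formula above transfers it to the asserted averaged LDP with convex rate function $I_a$. The main obstacle is the averaged MGF step: under a merely stationary-ergodic $\mathbb{P}$ the factors $\phi_\lambda\circ T_j$ are strongly dependent (each depends on the entire environment to the left of its base point), so Cram\'er's theorem does not apply and the limit must be extracted from the recursive structure above --- a device that is essentially one-dimensional and has no clean analog in $d\geq 2$.
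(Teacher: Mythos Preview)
The paper does not prove Theorem~\ref{aLDPgeneric} at all: it is quoted from the literature, first from Comets--Gantert--Zeitouni~\cite{CGZ} for $d=1$ and then from Varadhan~\cite{Raghu} for arbitrary $d\geq1$ under~(\ref{i.i.d.}). Every original result in the paper (Lemma~\ref{Caveraged}, Theorems~\ref{qual} and~\ref{sevval}) takes the averaged LDP as input, so there is no ``paper's own proof'' to compare against.

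Your sketch faithfully outlines the \cite{CGZ} strategy for $d=1$: an LDP for the hitting times $H_k$ via G\"artner--Ellis, followed by the perspective transfer $I_a(v)=vJ(1/v)$. The recursion you write for $\phi_\lambda$ is correct, and the quenched limit via Birkhoff is exactly how the quenched half of \cite{CGZ} proceeds. However, the step you yourself flag as ``the main obstacle'' --- existence and essential smoothness of the \emph{averaged} limit $\Lambda(\lambda)=\lim_k k^{-1}\log\mathbb{E}\bigl[\prod_{j<k}\phi_\lambda\circ T_j\bigr]$ --- is not carried out in your proposal; you simply point back to \cite{CGZ}. That step is the entire content of the averaged half of the theorem, so as written this is a plan rather than a proof. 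You also gloss over the gluing of the $v>0$ and $v<0$ branches and the determination of $I_a(0)$, which in \cite{CGZ} is where the flat pieces of $I_a$ arise and requires separate care. Finally, as you concede in your last sentence, the hitting-time factorization is a purely one-dimensional device; for the multidimensional setting the present paper actually works in, one must appeal to Varadhan's argument in \cite{Raghu}, which proceeds by an entirely different route (a process-level LDP on path measures and the variational formula~(\ref{divaneasik})) and is not touched by your outline.
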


\noindent They establish this result for a class of environments including the i.i.d.\ case, and obtain the following variational formula for $I_a$:
$$I_a(\xi)=\inf_{\mathbb{Q}}\left\{I_q^\mathbb{Q}(\xi) + |\xi|h\left(\mathbb{Q}\left|\mathbb{P}\right.\right)\right\}.$$
Here, the infimum is over all stationary and ergodic probability measures on $\Omega$, $I_q^\mathbb{Q}(\cdot)$ denotes the rate function for the quenched LDP when the environment measure is $\mathbb{Q}$, and $h\left(\cdot\left|\cdot\right.\right)$ is specific relative entropy. Similar to the quenched picture, the graph of $I_a$ is shown to typically have flat pieces. Note that the regularity properties of $I_a$ are not studied in \cite{CGZ}.

Varadhan \cite{Raghu} considers RWRE on $\mathbb{Z}^d$, assumes that $\mathbb{P}$ is a product measure, and proves Theorem \ref{aLDPgeneric} for any $d\geq1$. He gives yet another variational formula for $I_a$. Below, we introduce some notation in order to write down this formula.

An infinite path $\left(x_i\right)_{i\leq0}$ with nearest-neighbor steps $x_{i+1}-x_i$ is said to be in $W_\infty^{\mathrm{tr}}$ if $x_o=0$ and $\lim_{i\to-\infty}|x_i|=\infty$. For any $w\in W_\infty^{\mathrm{tr}}$, let $n_o$ be the number of times $w$ visits the origin, excluding the last visit. By the transience assumption, $n_o$ is finite. For any $z\in U$, let $n_{o,z}$ be the number of times $w$ jumps to $z$ after a visit to the origin. Clearly, $\sum_{z\in U}n_{o,z}=n_o$. If the averaged walk starts from time $-\infty$ and its path $\left(X_i\right)_{i\leq0}$ up to the present is conditioned to be equal to $w$, then the probability of the next step being equal to $z$ is
\begin{equation}\label{ozgurevren}
q(w,z):=\frac{\mathbb{E}\left[\pi(0,z)\prod_{z'\in U}\pi(0,z')^{n_{o,z'}}\right]}{\mathbb{E}\left[\prod_{z'\in U}\pi(0,z')^{n_{o,z'}}\right]}
\end{equation}
by Bayes' rule. The probability measure that the averaged walk induces on $\left(X_n\right)_{n\geq0}$ conditioned on $\{\left(X_i\right)_{i\leq0}=w\}$ is denoted by $Q^w$. As usual, $E^w$ stands for expectation under $Q^w$.

Consider the map $T^*:W_\infty^{\mathrm{tr}}\to W_\infty^{\mathrm{tr}}$ that takes $\left(x_i\right)_{i\leq0}$ to $\left(x_i-x_{-1}\right)_{i\leq-1}$. Let $\mathcal{I}$ be the set of probability measures on $W_\infty^{\mathrm{tr}}$ that are invariant under $T^*$, and $\mathcal{E}$ be the set of extremal points of $\mathcal{I}$. Each $\mu\in\mathcal{I}$ (resp. $\mu\in\mathcal{E})$ corresponds to a transient process with stationary (resp. stationary and ergodic) increments, and induces a probability measure $Q_\mu$ on particle paths $\left(X_i\right)_{i\in\mathbb{Z}}$. The associated ``mean drift" is $m(\mu) := \int\left(x_o-x_{-1}\right)\mathrm{d}\mu=Q_\mu(X_1-X_o)$. Define 
\begin{equation}\label{parisolur}
Q_\mu^w(\cdot):=Q_\mu(\,\cdot\,\left|\sigma(X_i:i\leq0)\right.)(w)\quad\text{and}\quad q_\mu(w,z):=Q_\mu^w(X_1=z)
\end{equation} for $\mu$-a.e.\ $w$ and $z\in U$. Denote expectations under $Q_\mu$ and $Q_\mu^w$ by $E_\mu$ and $E_\mu^w$, respectively.

With this notation,
\begin{equation}\label{divaneasik}
I_a(\xi)=\inf_{\substack{\mu\in\mathcal{E}:\\m(\mu)=\xi}}\mathfrak{I}_a(\mu)
\end{equation} for every $\xi\neq0$, where
\begin{equation}\label{hayirsh}
\mathfrak{I}_a(\mu):=\int_{W_\infty^{\mathrm{tr}}}\left[\sum_{z\in U}q_\mu(w,z)\log\frac{q_\mu(w,z)}{q(w,z)}\right]\,\mathrm{d}\mu(w).
\end{equation}

Aside from showing that $I_a$ is convex, Varadhan analyzes the set $$\mathcal{N}:=\left\{\xi\in\mathbb{R}^d:I_a(\xi)=0\right\}$$ where the rate function $I_a$ vanishes. For non-nestling walks, $\mathcal{N}$ consists of a single point $\xi_o$ which is the LLN velocity. In the case of nestling walks, $\mathcal{N}$ is a line segment through the origin that can extend in one or both directions. Berger \cite{Berger} shows that $\mathcal{N}$ cannot extend in both directions when $d\geq5$.

Rassoul-Agha \cite{FirasLDP} generalizes Varadhan's result to a class of mixing environments, and also to some other models of random walk on $\mathbb{Z}^d$.

\subsection{Regeneration times}\label{regtimes}

Take a unit vector $\hat{u}\in\mathcal{S}^{d-1}$. Let $$\beta=\beta(\hat{u}):=\inf\left\{k\geq0:\langle X_k,\hat{u}\rangle<\langle X_o,\hat{u}\rangle\right\}.$$
Recursively define a sequence $\left(\tau_m\right)_{m\geq1}=\left(\tau_m(\hat{u})\right)_{m\geq1}$ of random times, which are referred to as ``regeneration times" (relative to $\hat{u}$), by 
\begin{align*}
\tau_1&:=\inf\left\{j>0:\langle X_i,\hat{u}\rangle<\langle X_j,\hat{u}\rangle\leq\langle X_k,\hat{u}\rangle\mbox{ for all }i,k\mbox{ with }i<j<k\right\}\quad\mbox{and}\\
\tau_{m}&:=\inf\left\{j>\tau_{m-1}:\langle X_i,\hat{u}\rangle<\langle X_j,\hat{u}\rangle\leq\langle X_k,\hat{u}\rangle\mbox{ for all }i,k\mbox{ with }i<j<k\right\}
\end{align*}
for every $m\geq2$.
If the walk is directionally transient relative to $\hat{u}$, i.e., if
\begin{equation}\label{transience}
P_o\left(\lim_{n\to\infty}\langle X_n,\hat{u}\rangle=\infty\right)=1,
\end{equation}
then $P_o(\beta=\infty)>0$ and $P_o\left(\tau_m<\infty\right)=1$ for every $m\geq1$.
As shown in \cite{SznitmanZerner}, the significance of $\left(\tau_m\right)_{m\geq1}$ is due to the fact that $$\left(X_{\tau_m+1}-X_{\tau_m},X_{\tau_m+2}-X_{\tau_m},\ldots,X_{\tau_{m+1}}-X_{\tau_m}\right)_{m\geq1}$$ is an i.i.d.\ sequence under $P_o$ when
\begin{equation}\label{i.i.d.}
\omega=(\omega_x)_{x\in\mathbb{Z}^d}\mbox{ is an i.i.d.\ collection}.
\end{equation}

\begin{definition}
RWRE is said to satisfy Sznitman's transience condition (\textbf{T}) relative to a unit vector $\hat{u}\in\mathcal{S}^{d-1}$ if (\ref{transience}) holds and
\begin{equation}\label{moment}
E_o\left[\sup_{1\leq i\leq\tau_1}\exp\left\{c_1\left|X_i\right|\right\}\right]<\infty\mbox{ for some }c_1>0.
\end{equation}
\end{definition}

The following theorem lists some of the important facts regarding condition (\textbf{T}).

\begin{theorem}\label{bombagibi}
Consider RWRE on $\mathbb{Z}^d$. Assume (\ref{nearestneighbor}), (\ref{ellipticity}) and (\ref{i.i.d.}). Take a unit vector $\hat{u}\in\mathcal{S}^{d-1}$.
\begin{itemize}
\item[(a)] For $d=1$, (\ref{transience}) implies (\ref{moment}). Hence, (\textbf{T}) is equivalent to (\ref{transience}). (See \cite{SznitmanT}, Proposition 2.6.)
The LLN holds with limiting velocity
\begin{equation}\label{huseysh}
\xi_o=\frac{E_o\left[\left.X_{\tau_1}\right|\beta=\infty\right]}{E_o\left[\left.\tau_1\right|\beta=\infty\right]}
\end{equation}
which can be zero.
\item[(b)] For $d\geq1$, if the walk is non-nestling relative to $\hat{u}$, then
\begin{equation}\label{isimlazimkesin}
E_o\left[\exp\left\{c_2\tau_1\right\}\right]<\infty
\end{equation} for some $c_2>0$. In particular, (\textbf{T}) is satisfied. (See \cite{SznitmanSlowdown}, Theorem 2.1.)
\item[(c)] For $d\geq2$, if (\textbf{T}) holds relative to $\hat{u}$, then all the $P_o$-moments of $\tau_1$ are finite. This implies a LLN and an averaged central limit theorem. The LLN velocity $\xi_o$ is given by the formula in (\ref{huseysh}), and it satisfies $\langle\xi_o,\hat{u}\rangle>0$. (See \cite{SznitmanT}, Theorems 3.4 and 3.6.)
\end{itemize}
\end{theorem}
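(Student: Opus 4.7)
The plan is to reduce everything to moment estimates on the first regeneration block $(X_{\tau_1},\tau_1)$ and then invoke the i.i.d.\ regeneration structure from \cite{SznitmanZerner}: the successive blocks $(X_{\tau_{m+1}}-X_{\tau_m},\tau_{m+1}-\tau_m)_{m\geq1}$ are i.i.d.\ under $P_o$, distributed as $(X_{\tau_1},\tau_1)$ under $P_o(\,\cdot\,|\,\beta=\infty)$. The common backbone is the decomposition of $\tau_1$ as the total length of a geometric (with parameter $p:=P_o(\beta=\infty)>0$, positive under (\ref{transience}) by ellipticity) number of ``failed regeneration attempts''; the three parts differ only in how tightly the spatial and temporal sizes of a single attempt can be controlled.

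For part (a), take WLOG $\hat{u}=+1$. Since the walk is nearest-neighbor, $X_{\tau_1}$ equals the number of attempts $K$, which is geometric and hence exponentially integrable. The remaining task is to bound $\sup_{i\leq\tau_1}|X_i|=\max(X_{\tau_1},-\inf_{i\leq\tau_1}X_i)$. In one dimension under uniform ellipticity and transience to $+\infty$, a single failed excursion below a new maximum has exponentially decaying depth: once at depth $n$ below the current maximum, the quenched probability of reaching depth $n+1$ before climbing back is at most $1-\kappa$ by (\ref{ellipticity}). Taking the maximum over the geometric number of failed attempts preserves the exponential tail and yields (\ref{moment}).

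For part (b), non-nestling (\ref{nonmumu}) provides a deterministic $\delta>0$ such that at every step the conditional quenched drift in direction $\hat{u}$ is at least $\delta$. Then $\langle X_n,\hat{u}\rangle-n\delta$ is a quenched supermartingale with bounded increments, and Azuma's inequality yields $P_o^\omega(\langle X_n,\hat{u}\rangle\leq n\delta/2)\leq e^{-cn}$ uniformly in $\omega$. Since $\{\tau_1>n\}$ is contained, up to an event of exponentially small probability, in the event that $\langle X_n,\hat{u}\rangle$ remains near its past infimum, this spatial concentration converts directly into exponential tail of $\tau_1$, giving (\ref{isimlazimkesin}).

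Part (c) is the real obstacle: one cannot extract exponential $\tau_1$-moments from (\textbf{T}) alone, because nestling walks satisfying (\textbf{T}) may still have heavy-tailed $\tau_1$. My plan, following Sznitman, is to bootstrap the spatial exponential tail (\ref{moment}) via his seminorm/effective-criterion technology: in dimension $d\geq2$, a time trap of duration $T$ must carry spatial extent at least of order $T^{1/d}$, so exponential control on the range of the walk up to $\tau_1$ forces $E_o[\tau_1^p]<\infty$ for every $p$. Once this is in hand, the LLN and the averaged CLT follow from their i.i.d.\ counterparts applied to the regeneration blocks, with limiting velocity (\ref{huseysh}); the strict inequality $\langle\xi_o,\hat{u}\rangle>0$ is immediate from $\langle X_{\tau_1},\hat{u}\rangle\geq1$ together with finiteness of $E_o[\tau_1\,|\,\beta=\infty]$.
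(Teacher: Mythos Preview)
The paper does not prove this theorem at all: it is a survey statement, and each item is accompanied only by a citation to the relevant result of Sznitman (Proposition~2.6 and Theorems~3.4, 3.6 of \cite{SznitmanT}; Theorem~2.1 of \cite{SznitmanSlowdown}). So there is nothing in the paper to compare your argument against, and your part~(c) is in fact doing exactly what the paper does, namely pointing to Sznitman's effective-criterion machinery rather than reproving it.

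That said, your sketch for part~(a) contains a genuine error. The assertion ``once at depth $n$ below the current maximum, the quenched probability of reaching depth $n+1$ before climbing back is at most $1-\kappa$'' is false under either reading of ``climbing back.'' If it means climbing back to the current maximum, this is a gambler's-ruin probability for hitting $n+1$ before $0$ starting from $n$; with one boundary a single step away and the other $n$ steps away, this probability can be made as close to $1$ as you like when $\kappa$ is small and $n$ is large. If it means climbing back one step to depth $n-1$, then the bound $1-\kappa$ holds for the \emph{first} step, but the walk can return from $n-1$ to $n$ and then proceed to $n+1$, so the maximum depth does not acquire a geometric tail from this alone. The actual one-dimensional argument in \cite{SznitmanT} uses the transience hypothesis (equivalently $\mathbb{E}[\log\rho_0]<0$) in an essential way, not just uniform ellipticity; ellipticity by itself cannot yield exponential control on excursion depth, since it is consistent with recurrence. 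Your part~(b) is closer to the mark, though the containment of $\{\tau_1>n\}$ you invoke is not literally true and one has to go through the ladder-variable decomposition of \cite{SznitmanZerner} to turn the Azuma bound into an exponential tail for $\tau_1$.
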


\subsection{Our results}\label{israildeyim}

It follows from Theorem \ref{aLDPgeneric} and Varadhan's lemma (see \cite{DZ}) that 
\begin{equation}\label{kazimkoyuncu}
\Lambda_a(\theta):=\lim_{n\to\infty}\frac{1}{n}\log E_o\left[\exp\{\langle\theta,X_n\rangle\}\right]=\sup_{\xi\in\mathbb{R}^d}\left\{\langle\theta,\xi\rangle - I_a(\xi)\right\}
\end{equation} for every $\theta\in\mathbb{R}^d$. Hence, $\Lambda_a=I_a^*$, the convex conjugate of $I_a$.

With $c_1$ and $c_2$ as in (\ref{moment}) and (\ref{isimlazimkesin}), define
\begin{equation}\label{nebiliyim}
\mathcal{C}:=\left\{\begin{array}{ll}
\left\{\theta\in\mathbb{R}^d:|\theta|<c_2/2\right\}&\mbox{if the walk is non-nestling,}\\
\left\{\theta\in\mathbb{R}^d:|\theta|<c_1\,, \Lambda_a(\theta)>0\right\}&\mbox{if the walk is nestling and condition (\textbf{T}) holds.}
\end{array}\right.
\end{equation}
In the latter case, as we will see, $\Lambda_a$ is a nonnegative convex function, and $\mathcal{C}$ is nothing but an open ball minus a convex set.

We start Section \ref{aLDPregularitysection} by obtaining a series of intermediate results including

\begin{lemma}\label{Caveraged}
Consider RWRE on $\mathbb{Z}^d$. Assume (\ref{nearestneighbor}), (\ref{ellipticity}) and (\ref{i.i.d.}). If (\textbf{T}) holds relative to some $\hat{u}\in\mathcal{S}^{d-1}$, then $\Lambda_a$ is analytic on $\mathcal{C}$.
Moreover, the Hessian $\mathcal{H}_a$ of $\Lambda_a$ is positive definite on $\mathcal{C}$.
\end{lemma}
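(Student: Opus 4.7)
The plan is to exploit the i.i.d.\ regeneration structure provided by (\ref{i.i.d.}) under (\textbf{T}) and then apply the analytic implicit function theorem. Define
\[
\Phi(\theta,\lambda) := E_o\!\left[\exp\!\left(\langle\theta,X_{\tau_1}\rangle - \lambda\tau_1\right)\,\big|\,\beta=\infty\right].
\]
A standard renewal argument (cf.\ \cite{SznitmanT,SznitmanSlowdown}), decomposing $E_o[\exp(\langle\theta,X_n\rangle)]$ over the i.i.d.\ regeneration increments, shows that for each $\theta\in\mathcal{C}$ the value $\Lambda_a(\theta)$ is the unique real $\lambda$ solving $\Phi(\theta,\lambda)=1$. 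The strategy is then to verify that $\Phi$ is jointly holomorphic in a complex neighborhood of every point $(\theta,\Lambda_a(\theta))$ with $\theta\in\mathcal{C}$ and to invoke the analytic implicit function theorem.

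For the holomorphic extension, the nearest-neighbor property (\ref{nearestneighbor}) gives $|X_{\tau_1}|\leq\tau_1$, hence $|\exp(\langle\theta,X_{\tau_1}\rangle-\lambda\tau_1)|\leq\exp((|\mathrm{Re}\,\theta|-\mathrm{Re}\,\lambda)\tau_1)$. In the non-nestling case, (\ref{isimlazimkesin}) supplies the exponential $\tau_1$-moment needed to dominate the integrand on a complex neighborhood of $\{|\theta|<c_2/2\}\times\{|\lambda|<c_2/2\}$. In the nestling case, (\ref{moment}) gives $E_o[e^{c_1|X_{\tau_1}|}]<\infty$, and since the definition of $\mathcal{C}$ forces $\Lambda_a(\theta)>0$, one can restrict to a complex neighborhood with $\mathrm{Re}\,\lambda>0$, so $|e^{-\lambda\tau_1}|\leq 1$ and the integrand is bounded by $e^{c_1|X_{\tau_1}|}$. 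Dominated convergence and Morera then deliver joint holomorphicity; since $\tau_1\geq 1$ gives $\partial_\lambda\Phi<0$, the analytic implicit function theorem yields real-analyticity of $\theta\mapsto\Lambda_a(\theta)$ on $\mathcal{C}$.

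For the Hessian, introduce the tilted probability $\tilde P_\theta$ with Radon--Nikodym derivative $\exp(\langle\theta,X_{\tau_1}\rangle-\Lambda_a(\theta)\tau_1)$ relative to $P_o(\,\cdot\mid\beta=\infty)$; this is a probability because $\Phi(\theta,\Lambda_a(\theta))=1$. Differentiating $\Phi(\theta,\Lambda_a(\theta))=1$ twice yields $\nabla\Lambda_a(\theta)=\tilde E_\theta[X_{\tau_1}]/\tilde E_\theta[\tau_1]$ and
\[
\mathcal{H}_a(\theta) \;=\; \frac{1}{\tilde E_\theta[\tau_1]}\,\tilde E_\theta\!\left[(X_{\tau_1}-\nabla\Lambda_a(\theta)\tau_1)(X_{\tau_1}-\nabla\Lambda_a(\theta)\tau_1)^{\!\top}\right],
\]
manifestly a positive semidefinite covariance matrix. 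Strict positivity reduces to showing that $X_{\tau_1}-\nabla\Lambda_a(\theta)\tau_1$ is not supported on any proper hyperplane under $\tilde P_\theta$. Uniform ellipticity (\ref{ellipticity}) supplies this: for any unit $v$, short detours after the first step of the walk --- each still compatible with $\{\beta=\infty\}$ and of positive $P_o$-mass --- produce regeneration paths with genuinely different values of $\langle v,X_{\tau_1}-\nabla\Lambda_a(\theta)\tau_1\rangle$. The main obstacle will be this final nondegeneracy argument, which requires a careful combinatorial construction valid in every direction $v$; a secondary technical point is making the identification $\Phi(\theta,\Lambda_a(\theta))=1$ rigorous in the nestling regime, where only polynomial $\tau_1$-moments are available and one must lean on $\Lambda_a(\theta)>0$ to provide the missing exponential decay.
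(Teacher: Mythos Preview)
Your proposal is correct and follows essentially the same route as the paper: define $\Phi$ (the paper calls it $\psi$), establish the identity $\Phi(\theta,\Lambda_a(\theta))=1$ via a renewal decomposition, extend $\Phi$ holomorphically using the moment bounds from (\ref{moment}) and (\ref{isimlazimkesin}) together with $\Lambda_a(\theta)>0$ in the nestling case, invoke the analytic implicit function theorem (with $\partial_\lambda\Phi\in(-\infty,-1]$), and differentiate twice to obtain the covariance formula for $\mathcal{H}_a$ with strict positivity supplied by (\ref{ellipticity}). The point you flag as a secondary technicality---making $\Phi(\theta,\Lambda_a(\theta))=1$ rigorous---is exactly what the paper isolates and proves as Lemmas~\ref{phillysh} and~\ref{phillysh_one} prior to the proof of Lemma~\ref{Caveraged} itself, rather than citing it from \cite{SznitmanT,SznitmanSlowdown}.
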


We then use (\ref{kazimkoyuncu}) and convex duality to establish
\begin{theorem}\label{qual}
Under the assumptions of Lemma \ref{Caveraged}, the averaged rate function $I_a$ is strictly convex and analytic on the non-empty open set
\begin{equation}\label{tabiyanig}
\mathcal{A}:=\{\nabla\Lambda_a(\theta):\theta\in\mathcal{C}\}.
\end{equation}
\begin{itemize}
\item[(a)] If the walk is non-nestling, then $\mathcal{A}$ contains $\xi_o$, the LLN velocity.
\item[(b)] If the walk is nestling and $d=1$, then $\xi_o\in\partial\mathcal{A}$.
\item[(c)] If the walk is nestling and $d\geq2$, then
\begin{itemize}
\item[(i)] there exists a $(d-1)$-dimensional smooth surface patch $\mathcal{A}^b$ such that $\xi_o\in\mathcal{A}^b\subset\partial\mathcal{A}$, and
\item[(ii)] the unit vector $\eta_o$ normal to $\mathcal{A}^b$ (and pointing inside $\mathcal{A}$) at $\xi_o$ satisfies $\langle\eta_o,\xi_o\rangle>0$. (Roughly speaking, $\mathcal{A}$ is facing away from the origin.)
\end{itemize}
\end{itemize}
\end{theorem}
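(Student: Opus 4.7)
The backbone of the argument is convex duality applied to Lemma \ref{Caveraged}. Because $\Lambda_a$ is analytic with positive-definite Hessian $\mathcal{H}_a$ on $\mathcal{C}$, the gradient map $\nabla\Lambda_a:\mathcal{C}\to\mathbb{R}^d$ is a local analytic diffeomorphism by the inverse function theorem, so $\mathcal{A}$ is open. Injectivity on $\mathcal{C}$ comes from the global convexity of $\Lambda_a=I_a^{*}$ provided by (\ref{kazimkoyuncu}): if $\nabla\Lambda_a(\theta_1)=\nabla\Lambda_a(\theta_2)$ for distinct $\theta_1,\theta_2\in\mathcal{C}$, then $\Lambda_a$ is affine along $[\theta_1,\theta_2]$; since $\Lambda_a\ge 0$ and is strictly positive at both endpoints, the segment stays inside $\mathcal{C}$, contradicting $\mathcal{H}_a>0$. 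Thus $\nabla\Lambda_a:\mathcal{C}\to\mathcal{A}$ is an analytic bijection, and the Legendre--Fenchel formula
\[
I_a(\xi)=\langle (\nabla\Lambda_a)^{-1}(\xi),\xi\rangle-\Lambda_a\bigl((\nabla\Lambda_a)^{-1}(\xi)\bigr)
\]
makes $I_a$ analytic on $\mathcal{A}$ with Hessian $\mathcal{H}_a^{-1}$. Positive definiteness of $\mathcal{H}_a^{-1}$ delivers strict convexity of $I_a$ on $\mathcal{A}$.

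Part (a) is then essentially immediate. In the non-nestling case $\mathcal{C}$ is the open ball of radius $c_2/2$, so $0\in\mathcal{C}$, and uniform convergence of $n^{-1}\log E_o[e^{\langle\theta,X_n\rangle}]$ on a neighborhood of $0$ justifies interchanging gradient and limit in (\ref{kazimkoyuncu}): $\nabla\Lambda_a(0)=\lim_n n^{-1}E_o[X_n]=\xi_o$, which therefore lies in $\mathcal{A}$. In the nestling case $\Lambda_a(0)=0$ forces $0\in\partial\mathcal{C}$, and I would proceed by analysing the convex level set $Z:=\{\theta\in B(0,c_1):\Lambda_a(\theta)=0\}$, which is relatively closed in $B(0,c_1)$ and convex since $\Lambda_a$ is non-negative and convex; one has $\mathcal{C}=B(0,c_1)\setminus Z$ and $0\in\partial Z$.

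For (b), $d=1$, $Z$ is a closed sub-interval $[\theta_-,\theta_+]$ of $(-c_1,c_1)$ containing $0$, and $\nabla\Lambda_a$ extends one-sidedly to each endpoint as the corresponding one-sided derivative of $\Lambda_a$. These one-sided derivatives coincide with the endpoints of the zero segment $\mathcal{N}$ of $I_a$, and the regeneration formula (\ref{huseysh}) for $\xi_o$ together with Theorem \ref{bombagibi}(a) identifies $\xi_o$ with one of them, placing $\xi_o$ in $\partial\mathcal{A}$. For (c), I would show that in a neighborhood of $0$ the boundary $\partial Z$ is an analytic convex hypersurface, by pushing the analytic continuation and non-degenerate Hessian estimates behind Lemma \ref{Caveraged} up to $\partial Z$. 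One then defines $\mathcal{A}^b$ as the image under the continuous extension of $\nabla\Lambda_a$ of a small neighborhood of $0$ in $\partial Z$; non-degeneracy of $\mathcal{H}_a$ in directions tangent to $\partial Z$ makes this an immersion onto a $(d-1)$-dimensional analytic patch of $\partial\mathcal{A}$ containing $\xi_o$. Part (ii) follows from Theorem \ref{bombagibi}(c): by Legendre duality the inward normal $\eta_o$ to $\mathcal{A}^b$ at $\xi_o$ is (a positive multiple of) an outward normal of $Z$ at $0$, which lies in $\{\langle\cdot,\hat u\rangle>0\}$ because $Z$ is squeezed against the half-space $\{\langle\cdot,\hat u\rangle\le 0\}$ near the origin; combined with $\langle\xi_o,\hat u\rangle>0$ this yields $\langle\eta_o,\xi_o\rangle>0$.

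The main obstacle is the boundary analysis in (c): proving that $\partial Z$ is an analytic hypersurface near $0$ and that $\nabla\Lambda_a$ extends continuously --- ideally analytically --- across it with the asserted normal direction. This is where the regeneration-time moment bounds underpinning Lemma \ref{Caveraged} have to be pushed slightly past $\overline{\mathcal{C}}$, and where the geometric input from condition (\textbf{T}) via Theorem \ref{bombagibi} is genuinely used to pin down $\eta_o$. Everything else in the theorem is then a formal consequence of the inverse function theorem and Legendre duality.
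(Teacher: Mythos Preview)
Your backbone (convex duality plus the inverse function theorem applied to Lemma \ref{Caveraged}) matches the paper, and part (a) is essentially right; the paper obtains $\nabla\Lambda_a(0)=\xi_o$ more cleanly from the explicit gradient formula (\ref{gradientsh}) rather than by interchanging a limit and a derivative.

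The genuine gap is in part (c)(ii). Your claim that ``by Legendre duality the inward normal $\eta_o$ to $\mathcal{A}^b$ at $\xi_o$ is a positive multiple of an outward normal of $Z$ at $0$'' is false: the map $\theta\mapsto\nabla\Lambda_a(\theta)$ has Jacobian $\mathcal{H}_a$, so normals to hypersurfaces transform by $(\mathcal{H}_a)^{-T}=\mathcal{H}_a^{-1}$, and $\eta_o$ is a positive multiple of $\mathcal{H}_a(0)^{-1}n$, where $n$ is the outward normal to $\mathcal{C}^b$ at $0$. Even granting your claim, the conclusion would not follow: two vectors both lying in the open half-space $\{\langle\cdot,\hat u\rangle>0\}$ need not have positive inner product (in $\mathbb{R}^2$ with $\hat u=e_1$, take $(1,10)$ and $(1,-10)$). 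The paper closes both holes at once by identifying $n$ \emph{exactly}. Lemma \ref{geldiiksh}(b) says $\mathcal{C}^b$ is locally the level set $\{\psi(\cdot,0)=1\}$, so $n$ is proportional to $\nabla_\theta\psi(0,0)=E_o[X_{\tau_1}\mid\beta=\infty]$, which by (\ref{huseysh}) is a positive multiple of $\xi_o$ itself. The chain rule then gives $\eta_o=c\,\overline{\mathcal{H}_a}(0)^{-1}\xi_o$, and positive definiteness of $\overline{\mathcal{H}_a}(0)^{-1}$ yields $\langle\eta_o,\xi_o\rangle=c\langle\xi_o,\overline{\mathcal{H}_a}(0)^{-1}\xi_o\rangle>0$. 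The geometric input you are missing is that the normal to $\mathcal{C}^b$ at $0$ is not merely in the half-space determined by $\hat u$ but points precisely along $\xi_o$.

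Your sketch of (b) is also incomplete: you assert that the one-sided derivative of $\Lambda_a$ at the relevant endpoint of $Z$ equals $\xi_o$, but this requires an argument (and $\xi_o$ can be $0$ when $d=1$). The paper takes $\theta_n\in\mathcal{C}$ with $\theta_n\to0$ and controls $\nabla\Lambda_a(\theta_n)$ through (\ref{gradientsh}): dominated convergence on the numerator and Fatou on the denominator give $\limsup\nabla\Lambda_a(\theta_n)\le\xi_o$, while the matching lower bound uses that $I_a$ is linear on $[0,\xi_o]$, forcing $(0,\xi_o]\cap\mathcal{A}=\emptyset$.
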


\begin{remark}
After making this work available online as part of \cite{YilmazThesis}, we learned that Peterson \cite{PetersonThesis} independently proved Theorem \ref{qual} for non-nestling walks. His technique is somewhat different from ours since it involves first considering large deviations for the joint process of regeneration times and positions. Later, using that technique, Peterson and Zeitouni \cite{JonOfer} reproduced Theorem \ref{qual} in its full generality. Plus, in the nestling case, they showed that
\begin{equation}\label{lineeredebiyati}
\mbox{$I_a(t\xi)=tI_a(\xi)$ for every $\xi\in\mathcal{A}^b$ and $t\in[0,1]$.}
\end{equation}

Under the assumptions of Theorem \ref{qual}, when $d\geq4$, we recently proved in \cite{YilmazQA} that $I_a=I_q$ on a closed set whose interior contains $\{\xi\neq0:I_a(\xi)=0\}$. Also, we gave an alternative proof of (\ref{lineeredebiyati}).
\end{remark}

In Section \ref{aLDPminimizersection}, we identify the unique minimizer in (\ref{divaneasik}) for every $\xi\in\mathcal{A}$. The natural interpretation is that this minimizer gives the distribution of the RWRE path under $P_o$ when the particle is conditioned to escape to infinity with mean velocity $\xi$. 

\begin{definition}\label{definemuyuanan}
Denote the random steps of the particle by $(Z_n)_{n\geq1}:=(X_n-X_{n-1})_{n\geq1}$. Assume (\ref{nearestneighbor}), (\ref{ellipticity}), (\ref{i.i.d.}) and (\textbf{T}). The Hessian $\mathcal{H}_a$ of $\Lambda_a$ is positive definite on $\mathcal{C}$ by Lemma \ref{Caveraged}. Hence, for every $\xi\in\mathcal{A}$, there exists a unique $\theta\in\mathcal{C}$ satisfying $\xi=\nabla\Lambda_a(\theta)$. For every $K\in\mathbb{N}$, take any bounded function $f:U^{\mathbb{N}}\rightarrow\mathbb{R}$ such that $f((z_i)_{i\geq1})$ is independent of $(z_i)_{i>K}$. Define a probability measure $\bar{\mu}_\xi^\infty$ on $U^\mathbb{N}$ by setting
\begin{equation}
\int\!\! f\mathrm{d}\bar{\mu}_\xi^\infty:=\frac{E_o\left[\left.\sum_{j=0}^{\tau_1 -1}f((Z_{j+i})_{i\geq1})\ \exp\{\langle\theta,X_{\tau_{K}}\rangle - \Lambda_a(\theta)\tau_{K}\}\,\right|\,\beta=\infty\right]}{E_o\left[\left.\tau_1\ \exp\{\langle\theta,X_{\tau_1}\rangle - \Lambda_a(\theta)\tau_1\}\,\right|\,\beta=\infty\right]}.\label{muyucananan}
\end{equation}

\end{definition}

\begin{theorem}\label{sevval}
Assume (\ref{nearestneighbor}), (\ref{ellipticity}), (\ref{i.i.d.}) and (\textbf{T}). Recall (\ref{tabiyanig}) and Definition \ref{definemuyuanan}. For every $\xi\in\mathcal{A}$, $\bar{\mu}_\xi^\infty$ induces a transient process with stationary and ergodic increments via the map $$(z_1,z_2,z_3,\ldots)\mapsto(z_1,z_1+z_2,z_1+z_2+z_3,\ldots).$$ Extend this process to a probability measure on doubly infinite paths $(x_i)_{i\in\mathbb{Z}}$, and refer to its restriction to $W_{\infty}^{tr}$ as $\mu_\xi^\infty$. With this notation, $\mu_\xi^\infty$ is the unique minimizer of (\ref{divaneasik}).
\end{theorem}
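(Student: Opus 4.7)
The plan is to interpret $\mu_\xi^\infty$ as a Gibbs-type measure obtained by exponentially tilting the averaged RWRE path measure on $\{\beta=\infty\}$ at the parameter $\theta\in\mathcal{C}$ uniquely determined by $\nabla\Lambda_a(\theta)=\xi$; Lemma \ref{Caveraged} and Theorem \ref{qual} guarantee the existence and uniqueness of $\theta$, together with the finiteness of all relevant exponential moments. The proof breaks into four steps: (i) show that (\ref{muyucananan}) is independent of $K$ (so $\bar\mu_\xi^\infty$ is well defined) and that its induced increment process is stationary and ergodic; (ii) show $m(\mu_\xi^\infty)=\xi$; (iii) show $\mathfrak{I}_a(\mu_\xi^\infty)=I_a(\xi)$, so that $\mu_\xi^\infty$ minimizes (\ref{divaneasik}); and (iv) prove uniqueness of the minimizer.

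For (i), the main tool is the renewal characterization
\[E_o\!\left[\exp\{\langle\theta,X_{\tau_1}\rangle-\Lambda_a(\theta)\tau_1\}\,\big|\,\beta=\infty\right]=1,\]
which follows from the i.i.d.\ regeneration decomposition of \cite{SznitmanZerner} combined with Cram\'er-type asymptotics applied to (\ref{kazimkoyuncu}). The conditional i.i.d.\ property of subsequent regeneration slabs then makes (\ref{muyucananan}) invariant under $K\mapsto K+1$. The Palm-type averaging $\sum_{j=0}^{\tau_1-1}$ supplies shift invariance of the increment process, while the renewal structure yields ergodicity via the SLLN. Step (ii) follows at once: setting $f((z_i))=z_1$ in (\ref{muyucananan}) and differentiating the renewal identity in $\theta$ gives
\[m(\mu_\xi^\infty)=\frac{E_o[X_{\tau_1}\exp\{\langle\theta,X_{\tau_1}\rangle-\Lambda_a(\theta)\tau_1\}\mid\beta=\infty]}{E_o[\tau_1\exp\{\langle\theta,X_{\tau_1}\rangle-\Lambda_a(\theta)\tau_1\}\mid\beta=\infty]}=\nabla\Lambda_a(\theta)=\xi.\]

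Step (iii) is the heart of the proof. Unpacking (\ref{muyucananan}) yields a Doob-type identity
\[\frac{q_{\mu_\xi^\infty}(w,z)}{q(w,z)}=\exp\{\langle\theta,z\rangle-\Lambda_a(\theta)\}\,\frac{h_\theta(T^*(w,z))}{h_\theta(w)}\qquad\text{for $\mu_\xi^\infty$-a.e.\ }w,\]
where $(w,z)$ denotes the past $w$ extended by the step $z$, and $h_\theta(w)$ is essentially the conditional $Q^w$-expectation of $\exp\{\langle\theta,X_{\tau_1}\rangle-\Lambda_a(\theta)\tau_1\}\mathbf{1}_{\{\beta=\infty\}}$. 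Substituting into (\ref{hayirsh}) and integrating against the $T^*$-invariant $\mu_\xi^\infty$, the $\log h_\theta$ contributions telescope by shift invariance to zero, leaving $\mathfrak{I}_a(\mu_\xi^\infty)=\langle\theta,\xi\rangle-\Lambda_a(\theta)=I_a(\xi)$, where the last equality uses (\ref{kazimkoyuncu}) and convex duality. For uniqueness (iv), an analogous computation applied to any other minimizer $\nu\in\mathcal{E}$ with $m(\nu)=\xi$ yields
\[\mathfrak{I}_a(\nu)-I_a(\xi)=\int\sum_z q_\nu(w,z)\log\frac{q_\nu(w,z)}{q_{\mu_\xi^\infty}(w,z)}\,d\nu(w)\geq 0,\]
and strict convexity of relative entropy forces $q_\nu=q_{\mu_\xi^\infty}$ $\nu$-a.s.; ergodicity together with the shared mean drift then pin $\nu=\mu_\xi^\infty$.

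The main obstacle is step (iii): rigorously constructing $h_\theta$, verifying its strict positivity, showing $\log h_\theta\in L^1(\mu)$ for all relevant $\mu$, and justifying the telescoping of the $\log h_\theta$ contributions. These rest on uniform ellipticity (\ref{ellipticity}) together with the exponential integrability of $\tau_1$ provided by condition (\textbf{T}) throughout $\mathcal{C}$; a secondary delicate point in (iv) is ensuring that the telescoping identity is also valid against the a priori foreign measure $\nu$, which again uses the uniform bounds on $h_\theta$.
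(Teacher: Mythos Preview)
Your route is genuinely different from the paper's. The paper never computes $\mathfrak{I}_a(\mu_\xi^\infty)$ directly and never writes down a Doob function $h_\theta$. Instead it proves an auxiliary conditional limit theorem (Theorem~\ref{averagedconditioningsh}): under $P_o(\,\cdot\mid |X_n/n-\xi|\le\delta)$ the empirical process of increments concentrates exponentially at $\bar\mu_\xi^\infty$. It then takes an arbitrary $\alpha\in\mathcal{E}$ with $m(\alpha)=\xi$ and $\bar\alpha\neq\bar\mu_\xi^\infty$, and uses a change-of-measure Jensen argument to lower-bound $P_o(\,|\langle f,\bar\nu_{n,X}^\infty-\bar\mu_\xi^\infty\rangle|>\epsilon,\;|X_n/n-\xi|\le\delta)$ by $\exp\{-n\mathfrak{I}_a(\alpha)+o(n)\}$; combining with Theorem~\ref{averagedconditioningsh} and the LDP forces $\mathfrak{I}_a(\alpha)>I_a(\xi)$. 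Existence of \emph{some} minimizer comes separately from Lemma~\ref{tugish} via Varadhan's compactification $\overline{W}$, and the facts $\mu_\xi^\infty\in\mathcal{E}$, $m(\mu_\xi^\infty)=\xi$ are then read off as corollaries rather than verified directly. Your approach buys explicitness and bypasses both Theorem~\ref{averagedconditioningsh} and the compactification, but the price is exactly the obstacle you flag in step~(iv): the telescoping of $\log h_\theta$ must hold against the \emph{foreign} measure $\nu$, and the only control you have on $h_\theta$ comes from (\ref{ellipticity}) and (\textbf{T}), which are statements about $P_o$, not about $Q_\nu$. An arbitrary competing $\nu\in\mathcal{E}$ can put mass on pasts $w$ with arbitrarily many revisits to the current site, where neither a uniform lower bound on $h_\theta(w)$ nor $\log h_\theta\in L^1(\nu)$ is evident; the paper's softer argument never has to evaluate the tilted kernel off the support of $\mu_\xi^\infty$ and so sidesteps this issue entirely.
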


\section{Strict convexity and analyticity}\label{aLDPregularitysection}

Assume (\ref{nearestneighbor}), (\ref{ellipticity}) and (\ref{i.i.d.}). If the walk is non-nestling, then (\ref{nonmumu}) is satisfied for some $\hat{u}\in\mathcal{S}^{d-1}$. If the walk is nestling, assume that (\textbf{T}) holds relative to some $\hat{u}\in\mathcal{S}^{d-1}$.

\subsection{Logarithmic moment generating function}

Recall (\ref{kazimkoyuncu}). By Jensen's inequality, 
$$\langle\theta,\xi_o\rangle=\lim_{n\to\infty}\frac{1}{n} E_o\left[\langle\theta,X_n\rangle\right]\leq\lim_{n\to\infty}\frac{1}{n}\log E_o\left[\exp\{\langle\theta,X_n\rangle\}\right]=\Lambda_a(\theta)\leq\lim_{n\to\infty}\frac{1}{n}\log E_o\left[\mathrm{e}^{|\theta|n}\right]=|\theta|.$$

\begin{lemma}\label{phillysh}
$E_o\left[\left.\exp\{\langle\theta,X_{\tau_1}\rangle-\Lambda_a(\theta)\tau_1\}\right|\beta=\infty\right]\leq1$ for every $\theta\in\mathbb{R}^d$.
\end{lemma}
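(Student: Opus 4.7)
My plan is to set up a discrete renewal equation for the conditional moment generating function of the walk, and then rule out the case $\phi>1$ (where $\phi$ denotes the conditional expectation whose bound by $1$ is the statement of the lemma) by a generating-function argument.

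First I would let $A:=\{\beta=\infty\}$ and define
\[
g(m):=E_o\bigl[e^{\langle\theta,X_m\rangle}\bigm|A\bigr],\ r(m):=E_o\bigl[e^{\langle\theta,X_m\rangle};\,\tau_1>m\bigm|A\bigr],\ h_j:=E_o\bigl[e^{\langle\theta,X_{\tau_1}\rangle};\,\tau_1=j\bigm|A\bigr].
\]
On $\{\tau_1<\infty\}$, which has full probability under \eqref{transience}, the definition of $\tau_1$ forces $\langle X_k,\hat{u}\rangle\ge\langle X_{\tau_1},\hat{u}\rangle>0$ for every $k\ge\tau_1$; hence $A$ coincides with $\{\langle X_i,\hat{u}\rangle\ge 0\mbox{ for all }i\le\tau_1\}$ and is therefore measurable with respect to $\sigma(X_0,\ldots,X_{\tau_1})$. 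Consequently the regeneration property of \cite{SznitmanZerner} (which uses \eqref{i.i.d.}) survives conditioning on $A$: under $P_o(\cdot|A)$, the shifted process $(X_{\tau_1+k}-X_{\tau_1})_{k\ge 0}$ is independent of $\sigma(X_0,\ldots,X_{\tau_1})$ and distributed as $(X_k)_{k\ge 0}$ under $P_o(\cdot|A)$. Splitting $g(m)$ according to whether $\tau_1>m$ or $\tau_1=j\le m$ then yields the discrete renewal equation $g(m)=r(m)+\sum_{j=1}^m h_j\,g(m-j)$ for $m\ge 0$.

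Next I would tilt: put $\tilde g(m):=e^{-\Lambda_a(\theta)m}g(m)$, $\tilde r(m):=e^{-\Lambda_a(\theta)m}r(m)$, and $\tilde h_j:=e^{-\Lambda_a(\theta)j}h_j$. The renewal equation is preserved and $\sum_{j\ge 1}\tilde h_j=\phi$. Since $g(m)\le E_o[e^{\langle\theta,X_m\rangle}]/P_o(A)$, definition \eqref{kazimkoyuncu} gives $\tilde g(m)\le e^{o(m)}/P_o(A)$, so $\limsup_m\tilde g(m)^{1/m}\le 1$. Suppose toward a contradiction that $\phi>1$. The series $\tilde H(s):=\sum_{j\ge 1}\tilde h_js^j$ is nondecreasing and continuous on $[0,1]$ with $\tilde H(0)=0$ and $\tilde H(1)=\phi>1$, so some $s_\ast\in(0,1)$ satisfies $\tilde H(s_\ast)=1$. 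Writing $\tilde G(s):=\sum_m\tilde g(m)s^m$ and $\tilde R(s):=\sum_m\tilde r(m)s^m$, iterating the renewal equation yields $\tilde G(s)=\tilde R(s)/(1-\tilde H(s))$ wherever both sides are finite. Because $\tilde R(s)\ge\tilde r(0)=1$ and $\tilde g(m),\tilde h_j\ge 0$, this forces $\tilde G(s)=+\infty$ for every $s\in(s_\ast,1)$, and Cauchy--Hadamard gives $\limsup_m\tilde g(m)^{1/m}\ge 1/s_\ast>1$, contradicting the previous bound. Hence $\phi\le 1$.

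The hard part will be justifying the renewal equation under the conditional measure $P_o(\cdot|A)$; this reduces to the $\sigma(X_0,\ldots,X_{\tau_1})$-measurability of $A$ observed in the first step, which is in turn immediate from the structure of $\tau_1$. Everything else is bookkeeping with \eqref{kazimkoyuncu} and a single application of Cauchy--Hadamard.
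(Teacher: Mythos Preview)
Your argument is correct. Both your proof and the paper's rest on the same two ingredients---the regeneration structure and the exponential growth rate $\Lambda_a$ of $E_o[e^{\langle\theta,X_n\rangle}]$---but they are organized quite differently. The paper avoids generating functions entirely: it fixes $\epsilon>0$, bounds
\[
E_o\bigl[\exp\{\langle\theta,X_{\tau_n}\rangle-(\Lambda_a(\theta)+\epsilon)\tau_n\}\bigr]
\le\sum_{i\ge n}E_o\bigl[\exp\{\langle\theta,X_i\rangle-(\Lambda_a(\theta)+\epsilon)i\}\bigr]
=\sum_{i\ge n}e^{o(i)-\epsilon i},
\]
and compares this with the renewal factorization $E_o[\,\cdots\,]\cdot\phi_\epsilon^{\,n-1}$ (under the \emph{unconditioned} $P_o$, so the first block carries no conditioning) to get $\phi_\epsilon\le e^{-\epsilon/2}$; monotone convergence as $\epsilon\downarrow0$ finishes. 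This is shorter and needs neither the conditional renewal equation nor Cauchy--Hadamard. Your route instead sets up the renewal equation at deterministic times $m$ under $P_o(\cdot\mid A)$ and lets the generating function detect the contradiction; this costs some machinery but makes the role of the subexponential bound $\tilde g(m)\le e^{o(m)}$ completely transparent. The one delicate point is your claim that $A=\{\beta=\infty\}$ lies in $\sigma(X_0,\ldots,X_{\tau_1})$, so that conditioning on $A$ preserves the post-$\tau_1$ independence and distribution; your justification via $\langle X_k,\hat u\rangle\ge\langle X_{\tau_1},\hat u\rangle>0$ for $k\ge\tau_1$ is exactly right (and uses $P_o(\tau_1<\infty)=1$, which holds under the standing assumptions of Section~\ref{aLDPregularitysection}).
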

\begin{proof}

For every $n\geq1$, $\theta\in\mathbb{R}^d$ and $\epsilon>0$,
\begin{align*}
E_o\left[\exp\left\{\langle\theta,X_{\tau_n}\rangle-(\Lambda_a(\theta)+\epsilon)\tau_n\right\}\right]&=\sum_{i=n}^\infty E_o\left[\exp\left\{\langle\theta,X_{\tau_n}\rangle-(\Lambda_a(\theta)+\epsilon)\tau_n\right\},\tau_n=i\right]\\
&\leq\sum_{i=n}^\infty E_o\left[\exp\left\{\langle\theta,X_i\rangle-(\Lambda_a(\theta)+\epsilon)i\right\}\right]\\
&=\sum_{i=n}^\infty\mathrm{e}^{o(i)-\epsilon i}\leq\sum_{i=n}^\infty\mathrm{e}^{-\epsilon i/2}=\mathrm{e}^{-\epsilon n/2}\left(1-\mathrm{e}^{-\epsilon/2}\right)^{-1}
\end{align*} when $n$ is sufficiently large. On the other hand, 
\begin{align*}
&E_o\left[\exp\left\{\langle\theta,X_{\tau_n}\rangle-(\Lambda_a(\theta)+\epsilon)\tau_n\right\}\right]\\
&\quad=E_o\left[\exp\left\{\langle\theta,X_{\tau_1}\rangle-(\Lambda_a(\theta)+\epsilon)\tau_1\right\}\right]E_o\left[\left.\exp\left\{\langle\theta,X_{\tau_1}\rangle-(\Lambda_a(\theta)+\epsilon)\tau_1\right\}\right|\beta=\infty\right]^{n-1}
\end{align*} by the renewal structure. Hence, $E_o\left[\left.\exp\left\{\langle\theta,X_{\tau_1}\rangle-(\Lambda_a(\theta)+\epsilon)\tau_1\right\}\right|\beta=\infty\right]\leq\mathrm{e}^{-\epsilon/2}$. The desired result is obtained by taking $\epsilon\to 0$ and applying the monotone convergence theorem.
\end{proof}

Recall (\ref{nebiliyim}). For every $\epsilon>0$, it is clear that $E_o\left[\left.\exp\{\langle\epsilon\hat{u},X_{\tau_1}\rangle\}\right|\beta=\infty\right]>1$. This, in combination with Lemma \ref{phillysh}, implies that $\Lambda_a(\epsilon\hat{u})>0$. Therefore, $\mathcal{C}$ is non-empty.

In the nestling case, $I_a(0)=0$, cf.\ \cite{Raghu}. It follows from (\ref{kazimkoyuncu}) and convex duality that 
\begin{equation}\label{bbqmangal}
0=I_a(0)=\sup_{\theta\in\mathbb{R}^d}\left\{\langle\theta,0\rangle - \Lambda_a(\theta)\right\}=-\inf_{\theta\in\mathbb{R}^d}\Lambda_a(\theta).
\end{equation}
In other words, $\Lambda_a(\theta)\geq0$ for every $\theta\in\mathbb{R}^d$. The zero-level set $\left\{\theta\in\mathbb{R}^d:\,\Lambda_a(\theta)=0\right\}$ of the convex function $\Lambda_a$ is convex, and $\mathcal{C}$ is an open ball minus this convex set.

\begin{lemma}\label{phillysh_one}
$E_o\left[\left.\exp\{\langle\theta,X_{\tau_1}\rangle-\Lambda_a(\theta)\tau_1\}\right|\beta=\infty\right]=1$ for every $\theta\in\mathcal{C}$.
\end{lemma}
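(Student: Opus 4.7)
The plan is to complement Lemma~\ref{phillysh} with a matching lower bound obtained by bootstrapping from the i.i.d.\ regeneration structure back to the definition of $\Lambda_a$. Set $\Psi(\theta,\lambda):=E_o[\exp\{\langle\theta,X_{\tau_1}\rangle-\lambda\tau_1\}\mid\beta=\infty]$; because $\tau_1\geq 1$ almost surely, $\Psi(\theta,\cdot)$ is continuous and strictly decreasing on its finiteness domain. Lemma~\ref{phillysh} gives $\Psi(\theta,\Lambda_a(\theta))\leq 1$, so it suffices to rule out strict inequality.

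Suppose for contradiction that $\Psi(\theta,\Lambda_a(\theta))<1$ for some $\theta\in\mathcal{C}$. By continuity in $\lambda$ I can pick $\lambda'<\Lambda_a(\theta)$ with $\Psi(\theta,\lambda')<1$. In the nestling case, the defining condition $\Lambda_a(\theta)>0$ of $\mathcal{C}$ lets me further arrange $\lambda'>0$; in the non-nestling case, $|\theta|<c_2/2$ combined with $|\lambda'|\leq|\Lambda_a(\theta)|+\epsilon\leq|\theta|+\epsilon$ keeps $|\theta|+|\lambda'|<c_2$, so Theorem~\ref{bombagibi}(b) supplies every exponential moment of $\tau_1$ I will invoke.

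The contradiction comes from the decomposition
\[
E_o\bigl[\exp\{\langle\theta,X_k\rangle-\lambda'k\}\bigr]=\sum_{n\geq 0}E_o\bigl[\exp\{\langle\theta,X_k\rangle-\lambda'k\},\ \tau_n\leq k<\tau_{n+1}\bigr].
\]
On the $n\geq 1$ summand I split the exponent as $(\langle\theta,X_{\tau_n}\rangle-\lambda'\tau_n)$ plus a remainder controlled by $|\theta|\sup_{i\leq\tau_{n+1}-\tau_n}|X_{\tau_n+i}-X_{\tau_n}|+\max(-\lambda',0)(\tau_{n+1}-\tau_n)$, drop the indicator, and invoke (\ref{i.i.d.}) to factor the resulting expectation as $A(\theta,\lambda')\,\Psi(\theta,\lambda')^{n-1}\,D(\theta,\lambda')$ with $A,D<\infty$ under the moment setup above (condition~(\textbf{T}) tames the walk-supremum factor in $D$ and $\max(-\lambda',0)=0$ in the nestling case). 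Summing the geometric series yields a constant $C(\theta,\lambda')<\infty$ since $\Psi(\theta,\lambda')<1$. The $n=0$ term is at most $E_o[\exp\{|\theta|\sup_{i\leq\tau_1}|X_i|\}\mathbf{1}_{\tau_1>k}]$, which is $o(1)$ by dominated convergence in the nestling case (hence $o(e^{\lambda'k})$ since $\lambda'>0$ there); in the non-nestling case a Markov shift by $e^{-\eta\tau_1}$ with $|\lambda'|<\eta<c_2-|\theta|$ renders it $O(e^{-\eta k})=o(e^{\lambda'k})$. Putting everything together gives $E_o[\exp\{\langle\theta,X_k\rangle\}]\leq(C+o(1))e^{\lambda'k}$, whence $\Lambda_a(\theta)\leq\lambda'<\Lambda_a(\theta)$, a contradiction.

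I expect the principal technical obstacle to lie in the nestling regime, where $\tau_1$ has no exponential moment and the factor $D$ would diverge if any $\exp\{|\lambda'|\tau_1\}$ term were present. The fix is built into the very definition of $\mathcal{C}$: the restriction $\Lambda_a(\theta)>0$ lets me choose $\lambda'>0$, so $\max(-\lambda',0)$ vanishes and condition~(\textbf{T})'s moment $E_o[\sup_{i\leq\tau_1}\exp\{c_1|X_i|\}]<\infty$ alone suffices to close the argument.
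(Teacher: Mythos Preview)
Your argument is correct and is essentially the same as the paper's: both decompose $E_o[\exp\{\langle\theta,X_n\rangle-rn\}]$ according to the last regeneration before time $n$, factor the resulting expectation via the i.i.d.\ block structure into a geometric series with ratio $\Psi(\theta,r)=E_o[\exp\{\langle\theta,X_{\tau_1}\rangle-r\tau_1\}\mid\beta=\infty]$, control the boundary terms using Theorem~\ref{bombagibi}(b) in the non-nestling case and condition~(\textbf{T}) together with the choice $r>0$ in the nestling case, and conclude that $\Psi(\theta,r)<1$ forces $\Lambda_a(\theta)\leq r$; taking $r=\lambda'<\Lambda_a(\theta)$ (the paper writes $r=\Lambda_a(\theta)-\epsilon$) then yields the lower bound complementing Lemma~\ref{phillysh}. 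The only differences are cosmetic: you phrase the implication as a proof by contradiction, and you bound the post-$\tau_m$ remainder via $|\theta|\sup_i|X_{\tau_m+i}-X_{\tau_m}|+\max(-\lambda',0)(\tau_{m+1}-\tau_m)$ whereas the paper keeps it as $\sup_{0\leq i<\tau_1}\exp\{\langle\theta,X_i\rangle-ri\}$.
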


\begin{proof}
Adopt the convention that $\tau_o=0$. For every $n\geq1$, $\theta\in\mathcal{C}$  and $r\in\mathbb{R}$,
\begin{align*}
&E_o\left[\exp\{\langle\theta,X_n\rangle-rn\}\right]=\sum_{m=0}^n\sum_{i=0}^n E_o\left[\exp\{\langle\theta,X_n\rangle-rn\},\tau_m\leq n<\tau_{m+1},n-\tau_m=i\right]\\
&\quad=\sum_{m=0}^n\sum_{i=0}^n E_o\left[\exp\{\langle\theta,X_{\tau_m}\rangle-r\tau_m\},\tau_m=n-i\right]E_o\left[\left.\exp\{\langle\theta,X_i\rangle-ri\},i<\tau_1\right|\beta=\infty\right]\\
&\quad\leq\sum_{m=0}^\infty E_o\left[\exp\{\langle\theta,X_{\tau_m}\rangle-r\tau_m\}\right]E_o\left[\left.\sup_{0\leq i<\tau_1}\exp\{\langle\theta,X_i\rangle-ri\}\right|\beta=\infty\right]\\
&\quad= E_o\left[\left.\sup_{0\leq i<\tau_1}\exp\{\langle\theta,X_i\rangle-ri\}\right|\beta=\infty\right]\\
&\quad\quad\times\left(1+E_o\left[\exp\{\langle\theta,X_{\tau_1}\rangle-r\tau_1\}\right]\sum_{m=0}^\infty E_o\left[\left.\exp\{\langle\theta,X_{\tau_1}\rangle-r\tau_1\}\right|\beta=\infty\right]^m\right)\\
&\quad<\infty
\end{align*}
whenever
\begin{align}
&E_o\left[\sup_{0\leq i<\tau_1}\exp\{\langle\theta,X_i\rangle-ri\}\right]<\infty,\quad E_o\left[\exp\{\langle\theta,X_{\tau_1}\rangle-r\tau_1\}\right]<\infty,\quad\mbox{and}\label{needthese}\\
&E_o\left[\left.\exp\{\langle\theta,X_{\tau_1}\rangle-r\tau_1\}\right|\beta=\infty\right]<1.\label{cannothold}
\end{align}
Therefore, (\ref{needthese}) and (\ref{cannothold}) imply that 
\begin{equation}\label{nottrue}
\Lambda_a(\theta)-r=\lim_{n\to\infty}\frac{1}{n}\log E_o\left[\exp\{\langle\theta,X_n\rangle-rn\}\right]\leq0.
\end{equation}

If the walk is non-nestling, then there exists an $\epsilon>0$ such that $|\theta|+|\Lambda_a(\theta)|+\epsilon\leq2|\theta|+\epsilon<c_2$. Take $r=\Lambda_a(\theta)-\epsilon$. Then, (\ref{needthese}) follows from Theorem \ref{bombagibi}. Since (\ref{nottrue}) is false, (\ref{cannothold}) is false as well. In other words,
\begin{equation}\label{dogrusubu}
1\leq E_o\left[\left.\exp\{\langle\theta,X_{\tau_1}\rangle-(\Lambda_a(\theta)-\epsilon)\tau_1\}\right|\beta=\infty\right]<\infty.
\end{equation}

If the walk is nestling, then $\Lambda_a(\theta)>0$ and there exists an $\epsilon>0$ such that $\Lambda_a(\theta)-\epsilon>0$. Take $r=\Lambda_a(\theta)-\epsilon$. Then, (\ref{needthese}) follows from (\ref{moment}). Since (\ref{nottrue}) is false, (\ref{dogrusubu}) is true.

Clearly, (\ref{dogrusubu}) and the monotone convergence theorem imply that $E_o\left[\left.\exp\{\langle\theta,X_{\tau_1}\rangle-\Lambda_a(\theta)\tau_1\}\right|\beta=\infty\right]\geq1$. Combined with Lemma \ref{phillysh}, this gives the desired result.
\end{proof}

\begin{lemma}\label{geldiiksh}

Assume that the walk is nestling. With $c_1$ as in (\ref{moment}), define
\begin{equation}\label{nebiliyimiste}
\mathcal{C}^b:= \left\{\theta\in\partial\mathcal{C}:|\theta|<c_1\right\}.
\end{equation}
\begin{itemize}
\item[(a)] If $|\theta|<c_1$, then $\theta\not\in\mathcal{C}$ if and only if $E_o\left[\left.\exp\{\langle\theta,X_{\tau_1}\rangle\}\right|\beta=\infty\right]\leq1$.
\item[(b)] If $|\theta|<c_1$, then $\theta\in\mathcal{C}^b$ if and only if $E_o\left[\left.\exp\{\langle\theta,X_{\tau_1}\rangle\}\right|\beta=\infty\right]=1$. 
\end{itemize}
\end{lemma}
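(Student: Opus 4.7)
The plan is to reduce both parts to analyzing the function $\Psi(\theta):=E_o[\exp\{\langle\theta,X_{\tau_1}\rangle\}|\beta=\infty]$, which by (\ref{moment}) is finite and real analytic on the open ball $\{|\theta|<c_1\}$. The key structural observation, used throughout, is that in the nestling case $\Lambda_a\geq 0$ by (\ref{bbqmangal}), so $\mathcal{C}=\{|\theta|<c_1,\Lambda_a(\theta)>0\}$; consequently, for $|\theta|<c_1$, the condition $\theta\notin\mathcal{C}$ is equivalent to $\Lambda_a(\theta)=0$.

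For part (a), I would handle each direction separately. The forward direction ($\theta\notin\mathcal{C}\Rightarrow\Psi(\theta)\leq 1$) follows immediately by feeding $\Lambda_a(\theta)=0$ into Lemma \ref{phillysh}. For the converse I would argue contrapositively: if $\theta\in\mathcal{C}$, then $\Lambda_a(\theta)>0$, and Lemma \ref{phillysh_one} gives $E_o[\exp\{\langle\theta,X_{\tau_1}\rangle-\Lambda_a(\theta)\tau_1\}|\beta=\infty]=1$. Since $\tau_1\geq 1$ almost surely, the factor $\exp\{\Lambda_a(\theta)\tau_1\}>1$ almost surely, and multiplying through yields the strict bound $\Psi(\theta)>1$.

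For part (b) $(\Rightarrow)$, given $\theta\in\mathcal{C}^b$ I would pick $\theta_n\in\mathcal{C}$ with $\theta_n\to\theta$, eventually satisfying $|\theta_n|\leq c':=(|\theta|+c_1)/2<c_1$. The integrands $\exp\{\langle\theta_n,X_{\tau_1}\rangle\}$ are then dominated by the $P_o(\cdot|\beta=\infty)$-integrable envelope $\exp\{c'|X_{\tau_1}|\}$, so dominated convergence together with $\Psi(\theta_n)>1$ from part (a) gives $\Psi(\theta)\geq 1$; the reverse inequality $\Psi(\theta)\leq 1$ follows from part (a) applied to $\theta$ itself, which lies outside $\mathcal{C}$.

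For part (b) $(\Leftarrow)$, I would argue by contradiction. Assuming $|\theta|<c_1$, $\Psi(\theta)=1$, but $\theta\notin\partial\mathcal{C}$, part (a) combined with the non-boundary hypothesis furnishes an open ball $V\ni\theta$ inside $\{|\cdot|<c_1\}\setminus\mathcal{C}$ on which $\Psi\leq 1$, with equality at the interior point $\theta$. Convexity of the moment generating function $\Psi$ then forces $\Psi\equiv 1$ on $V$, and real analyticity on the connected open ball $\{|\theta|<c_1\}$ propagates this to $\Psi\equiv 1$ on the entire ball. This contradicts the strict inequality $\Psi(\epsilon\hat{u})>1$ for small $\epsilon>0$ already recorded just before Lemma \ref{phillysh_one}. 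The main obstacle is precisely this last step: convexity alone would only yield $\Psi\equiv 1$ on $V$, so one critically needs the analyticity supplied by the exponential moment bound (\ref{moment}) to propagate the equality throughout the ball and reach a contradiction.
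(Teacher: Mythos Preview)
Your proof is correct and, for part (a) and the forward direction of (b), essentially identical to the paper's. For the reverse direction of (b) you diverge slightly: the paper argues that if $\theta$ lay in the interior of the zero set $\{\Lambda_a=0\}$, one could write $\theta=t\theta_1+(1-t)\theta_2$ with $\theta_1\neq\theta_2$ both in that set (and in $\{|\cdot|<c_1\}$), whence part (a) gives $\Psi(\theta_i)\leq 1$ and the \emph{strict} form of H\"older's inequality forces $\Psi(\theta)<1$; contraposition finishes. You instead use the maximum principle for convex functions to get $\Psi\equiv 1$ on a neighborhood and then the identity theorem for real-analytic functions to reach the global contradiction $\Psi(\epsilon\hat u)=1$. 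The paper's route is lighter---no analyticity needed---but tacitly relies on $\langle\theta_1-\theta_2,X_{\tau_1}\rangle$ not being $P_o(\cdot\,|\,\beta=\infty)$-a.s.\ constant (this follows from ellipticity, though the paper does not spell it out). Your route trades that nondegeneracy check for the heavier analytic-continuation step; both are legitimate.
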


\begin{proof}
Recall that $\Lambda_a(\theta)\geq0$ for every $\theta\in\mathbb{R}^d$ by (\ref{bbqmangal}).
If $|\theta|<c_1$ and $\theta\not\in\mathcal{C}$, then $\Lambda_a(\theta)=0$ and $E_o\left[\left.\exp\{\langle\theta,X_{\tau_1}\rangle\}\right|\beta=\infty\right]\leq1$ by Lemma \ref{phillysh}. Conversely, if $|\theta|<c_1$ and $E_o\left[\left.\exp\{\langle\theta,X_{\tau_1}\rangle\}\right|\beta=\infty\right]\leq1$, then $\Lambda_a(\theta)>0$ cannot be true because it would imply that
$$1=E_o\left[\left.\exp\{\langle\theta,X_{\tau_1}\rangle-\Lambda_a(\theta)\tau_1\}\right|\beta=\infty\right]< E_o\left[\left.\exp\{\langle\theta,X_{\tau_1}\rangle\}\right|\beta=\infty\right]\leq1$$ by Lemma \ref{phillysh_one}. Hence, $\Lambda_a(\theta)=0$. This proves part (a).

If $\theta\in\mathcal{C}^b$, then $\Lambda_a(\theta)=0$. Take $\theta_n\in\mathcal{C}$ such that $\theta_n\to\theta$. It follows from Lemma \ref{phillysh_one} that $$E_o\left[\left.\exp\{\langle\theta_n,X_{\tau_1}\rangle-\Lambda_a(\theta_n)\tau_1\}\right|\beta=\infty\right]=1.$$ Since $\Lambda_a$ is continuous at $\theta$, $E_o\left[\left.\exp\{\langle\theta,X_{\tau_1}\rangle\}\right|\beta=\infty\right]=1$ by (\ref{moment}) and the dominated convergence theorem.

$\Lambda_a$ is a convex function and $\{\theta\in\mathbb{R}^d:\Lambda_a(\theta)=0\}$ is convex. If $\theta$ is an interior point of this set, then $\theta=t\theta_1+(1-t)\theta_2$ for some $t\in(0,1)$ and $\theta_1,\theta_2\in\mathbb{R}^d$ such that $\theta_1\neq\theta_2$ and $E_o\left[\left.\exp\{\langle\theta_i,X_{\tau_1}\rangle\}\right|\beta=\infty\right]\leq1$ for $i=1,2$. By Jensen's inequality, $E_o\left[\left.\exp\{\langle\theta,X_{\tau_1}\rangle\}\right|\beta=\infty\right]<1$. The contraposition of this argument concludes the proof of part (b).
\end{proof}

\begin{proof}[Proof of Lemma \ref{Caveraged}]
Consider the function $\psi:\mathbb{R}^d\times\mathbb{R}\to\mathbb{R}$ defined as 
\begin{equation}\label{psish}
\psi(\theta,r):=E_o\left[\left.\exp\{\langle\theta,X_{\tau_1}\rangle-r\tau_1\}\right|\beta=\infty\right].
\end{equation} When $\theta\in\mathcal{C}$ and $|r-\Lambda_a(\theta)|$ is small enough, it follows from (\ref{moment}), Theorem \ref{bombagibi} and Lemma \ref{phillysh_one} that $\psi(\theta,r)<\infty$ and $\psi(\theta,\Lambda_a(\theta))=1$. Clearly, $(\theta,r)\mapsto\psi(\theta,r)$ is analytic at such $(\theta,r)$.

If the walk is non-nestling or if it is nestling but $d\geq2$, then all the $P_o$-moments of $\tau_1$ are finite and $E_o\left[\left.\tau_1\exp\{\langle\theta,X_{\tau_1}\rangle-\Lambda_a(\theta)\tau_1\}\right|\beta=\infty\right]<\infty$ by H\"older's inequality and Theorem \ref{bombagibi}.

If the walk is nestling and $d\geq1$, then $\Lambda_a(\theta)>0$, and (\ref{moment}) implies that
\begin{align*}
E_o\left[\left.\tau_1\exp\{\langle\theta,X_{\tau_1}\rangle-\Lambda_a(\theta)\tau_1\}\right|\beta=\infty\right]&\leq\left(\sup_{t\geq0}t\mathrm{e}^{-\Lambda_a(\theta)t}\right)E_o\left[\left.\exp\{\langle\theta,X_{\tau_1}\rangle\}\right|\beta=\infty\right]\\
&=(\mathrm{e}\Lambda_a(\theta))^{-1}E_o\left[\left.\exp\{\langle\theta,X_{\tau_1}\rangle\}\right|\beta=\infty\right]<\infty.
\end{align*}

In both cases, Lemma \ref{phillysh_one} implies that $$E_o\left[\left.\tau_1\exp\{\langle\theta,X_{\tau_1}\rangle-\Lambda_a(\theta)\tau_1\}\right|\beta=\infty\right]\geq E_o\left[\left.\exp\{\langle\theta,X_{\tau_1}\rangle-\Lambda_a(\theta)\tau_1\}\right|\beta=\infty\right]=1.$$ 
Therefore, $$\left.\partial_r\psi(\theta,r)\right|_{r=\Lambda_a(\theta)}=-E_o\left[\left.\tau_1\exp\{\langle\theta,X_{\tau_1}\rangle-\Lambda_a(\theta)\tau_1\}\right|\beta=\infty\right]\in\left(-\infty,-1\right],$$ and $\Lambda_a$ is analytic on $\mathcal{C}$ by the analytic implicit function theorem. (See \cite{implicit}, Theorem 6.1.2.)

Differentiating both sides of $\psi(\theta,\Lambda_a(\theta))=1$ with respect to $\theta$ gives
\begin{equation}\label{jacobiansh}
E_o\left[\left.\left(X_{\tau_1}-\nabla\Lambda_a(\theta)\tau_1\right)\exp\{\langle\theta,X_{\tau_1}\rangle-\Lambda_a(\theta)\tau_1\}\right|\beta=\infty\right]=0
\end{equation} and
\begin{equation}\label{gradientsh}
\nabla\Lambda_a(\theta)=\frac{E_o\left[\left.X_{\tau_1}\exp\{\langle\theta,X_{\tau_1}\rangle-\Lambda_a(\theta)\tau_1\}\right|\beta=\infty\right]}{E_o\left[\left.\tau_1\exp\{\langle\theta,X_{\tau_1}\rangle-\Lambda_a(\theta)\tau_1\}\right|\beta=\infty\right]}.
\end{equation}
Differentiating both sides of (\ref{jacobiansh}), we see that the Hessian $\mathcal{H}_a$ of $\Lambda_a$ satisfies
\begin{equation}\label{hessiansh}
\langle v_1,\mathcal{H}_a(\theta)v_2\rangle=\frac{E_o\left[\left.\langle X_{\tau_1}-\nabla\Lambda_a(\theta)\tau_1,v_1\rangle\langle X_{\tau_1}-\nabla\Lambda_a(\theta)\tau_1,v_2\rangle\,\exp\{\langle\theta,X_{\tau_1}\rangle-\Lambda_a(\theta)\tau_1\}\right|\beta=\infty\right]}{E_o\left[\left.\tau_1\exp\{\langle\theta,X_{\tau_1}\rangle-\Lambda_a(\theta)\tau_1\}\right|\beta=\infty\right]}
\end{equation} for any two vectors $v_1\in\mathbb{R}^d$ and $v_2\in\mathbb{R}^d$.

We already saw that the denominator of the RHS of (\ref{hessiansh}) is finite. A similar argument shows that the numerator is finite as well. (\ref{ellipticity}) ensures that the numerator is positive when $v_1=v_2$. Thus, $\mathcal{H}_a$ is positive definite on $\mathcal{C}$.
\end{proof}

\subsection{Rate function} 

\begin{proof}[Proof of Theorem \ref{qual}]
$\Lambda_a$ is analytic on $\mathcal{C}$, and the Hessian $\mathcal{H}_a$ of $\Lambda_a$ is positive definite on $\mathcal{C}$, cf.\ Lemma \ref{Caveraged}. Therefore, for every $\xi\in\mathcal{A}$, there exists a unique $\theta=\theta(\xi)\in\mathcal{C}$ such that $\xi=\nabla\Lambda_a(\theta)$. $\mathcal{A}$ is open since it is the pre-image of $\mathcal{C}$ under the map $\xi\mapsto\theta(\xi)$ which is analytic by the inverse function theorem. Since \begin{equation}\label{herkoyun}
I_a(\xi)=\sup_{\theta'\in\mathbb{R}^d}\left\{\langle\theta',\xi\rangle-\Lambda_a(\theta')\right\}=\langle\theta(\xi),\xi\rangle-\Lambda_a(\theta(\xi)),
\end{equation} we conclude that $I_a$ is analytic at $\xi$. Differentiating (\ref{herkoyun}) twice with respect to $\xi$ shows that the Hessian of $I_a$ at $\xi$ is equal to $\mathcal{H}_a(\theta(\xi))^{-1}$, a positive definite matrix. Therefore, $I_a$ is strictly convex on $\mathcal{A}$.

If the walk is non-nestling, then $0\in\mathcal{C}$ and $$\xi_o=\frac{E_o\left[\left.X_{\tau_1}\right|\beta=\infty\right]}{E_o\left[\left.\tau_1\right|\beta=\infty\right]}=\nabla\Lambda_a(0)\in\mathcal{A}$$ by (\ref{huseysh}) and (\ref{gradientsh}). This proves part (a).

The rest of this proof focuses on the nestling case. When $d=1$, Lemma \ref{geldiiksh} implies that $0\in\partial\mathcal{C}$. Take any $(\theta_n)_{n\geq1}$ with $\theta_n\in\mathcal{C}$ such that $\theta_n\to 0$. Then, any limit point of $(\nabla\Lambda_a(\theta_n))_{n\geq1}$ belongs to $\partial\mathcal{A}$. (\ref{huseysh}) and (\ref{gradientsh}) imply that
\begin{align}
\limsup_{n\to\infty}\nabla\Lambda_a(\theta_n)&=\limsup_{n\to\infty}\frac{E_o\left[\left.X_{\tau_1}\exp\{\langle\theta_n,X_{\tau_1}\rangle-\Lambda_a(\theta_n)\tau_1\}\right|\beta=\infty\right]}{E_o\left[\left.\tau_1\exp\{\langle\theta_n,X_{\tau_1}\rangle-\Lambda_a(\theta_n)\tau_1\}\right|\beta=\infty\right]}\label{nediyonsh}\\&\leq\frac{E_o\left[\left.X_{\tau_1}\right|\beta=\infty\right]}{E_o\left[\left.\tau_1\right|\beta=\infty\right]}=\xi_o,\label{nediyonggsh}
\end{align}
where we assume WLOG that $\hat{u}=1$. The numerator in (\ref{nediyonsh}) converges to the numerator in (\ref{nediyonggsh}) by (\ref{moment}) and the dominated convergence theorem. The denominator in (\ref{nediyonggsh}) bounds the liminf of the denominator in (\ref{nediyonsh}) by Fatou's lemma. $[0,\xi_o]\cap\mathcal{A}$ is empty since $I_a$ is linear on $[0,\xi_o]$. (This only makes sense if $\xi_o>0$. However, when $\xi_o=0$, it is clear from (\ref{gradientsh}) that $0\notin\mathcal{A}$.) Therefore, $\liminf_{n\to\infty}\nabla\Lambda_a(\theta_n)\geq\xi_o$. Hence, $\xi_o=\lim_{n\to\infty}\nabla\Lambda_a(\theta_n)\in\partial\mathcal{A}$.

When $d\geq2$, (\ref{gradientsh}), H\"older's inequality and Theorem \ref{bombagibi} imply that $\nabla\Lambda_a$ extends smoothly to $\mathcal{C}\cup\mathcal{C}^b$. Refer to the extension by $\overline{\nabla\Lambda_a}$. Define $\mathcal{A}^b:=\left\{\overline{\nabla\Lambda_a}(\theta): \theta\in\mathcal{C}^b\right\}$.
Note that $0\in\mathcal{C}^b\subset\partial\mathcal{C}$ by Lemma \ref{geldiiksh}, and $\xi_o=\overline{\nabla\Lambda_a}(0)\in\mathcal{A}^b\subset\partial\mathcal{A}$.

The map $\theta\mapsto\psi(\theta,0)=E_o\left[\left.\exp\{\langle\theta,X_{\tau_1}\rangle\}\right|\beta=\infty\right]$ is analytic on $\{\theta\in\mathbb{R}^d: |\theta|<c_1\}$. For every $\theta\in\mathcal{C}^b$,
$$\langle\nabla_\theta\psi(\theta,0),\hat{u}\rangle=E_o\left[\left.\langle X_{\tau_1},\hat{u}\rangle\exp\{\langle\theta,X_{\tau_1}\rangle\}\right|\beta=\infty\right]>0.$$
Lemma \ref{geldiiksh} and the implicit function theorem imply that $\mathcal{C}^b$ is the graph of an analytic function. Therefore, $\mathcal{A}^b$ is a $(d-1)$-dimensional smooth surface patch. Note that $$\left.\nabla_\theta\psi(\theta,0)\right|_{\theta=0}=E_o\left[\left.X_{\tau_1}\right|\beta=\infty\right]=E_o\left[\left.\tau_1\right|\beta=\infty\right]\xi_o$$ is normal to $\mathcal{C}^b$ at $0$. Refer to the extension of $\mathcal{H}_a$ to $\mathcal{C}\cup\mathcal{C}^b$ as $\overline{\mathcal{H}_a}$. The unit vector $\eta_o$ normal to $\mathcal{A}^b$ (and pointing inside $\mathcal{A}$) at $\xi_o$ is $c\overline{\mathcal{H}_a}(0)^{-1}\xi_o$ for some $c>0$ by the chain rule. It is clear from (\ref{ellipticity}) and (\ref{hessiansh}) that  $$\langle\eta_o,\xi_o\rangle=c\langle\xi_o,\overline{\mathcal{H}_a}(0)^{-1}\xi_o\rangle>0.\qedhere$$
\end{proof}

\section{Minimizer of Varadhan's variational formula}\label{aLDPminimizersection}

\subsection{Existence of the minimizer}

Varadhan's variational formula for the rate function $I_a$ at any $\xi\neq0$ is
\begin{equation}\label{divaneasiksh}
I_a(\xi)=\inf_{\substack{\mu\in\mathcal{E}:\\m(\mu)=\xi}}\mathfrak{I}_a(\mu).
\end{equation} 
Recall (\ref{parisolur}). There exists a measurable function $\hat{q}:W_\infty^{\mathrm{tr}}\times U\to[0,1]$ such that $\hat{q}(\cdot,z)=q_\mu(\cdot,z)$ holds $\mu$-a.s.\ for every $\mu\in\mathcal{I}$ and $z\in U$. (See \cite{DV4}, Lemma 3.4.) The formula (\ref{hayirsh}) for $\mathfrak{I}_a$ can be written as
\begin{equation}\label{evetsh}
\mathfrak{I}_a(\mu)=\int_{W_\infty^{\mathrm{tr}}}\left[\sum_{z\in U}\hat{q}(w,z)\log\frac{\hat{q}(w,z)}{q(w,z)}\right]\,\mathrm{d}\mu(w).
\end{equation}
Therefore, $\mathfrak{I}_a$ is affine linear on $\mathcal{I}$.

\begin{lemma}
$$I_a(\xi)=\inf_{\substack{\mu\in\mathcal{I}:\\m(\mu)=\xi}}\mathfrak{I}_a(\mu).$$
\end{lemma}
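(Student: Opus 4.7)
The plan is to show that enlarging the feasible set from $\mathcal{E}$ to $\mathcal{I}$ does not lower the infimum. Since $\mathcal{E}\subset\mathcal{I}$, the inequality $\inf_{\mu\in\mathcal{I}}\mathfrak{I}_a(\mu)\leq \inf_{\mu\in\mathcal{E}}\mathfrak{I}_a(\mu) = I_a(\xi)$ is immediate from Varadhan's formula (\ref{divaneasiksh}), so all the content lies in the reverse bound.

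To get the reverse bound, I would fix any $\mu\in\mathcal{I}$ with $m(\mu)=\xi$ and invoke the ergodic decomposition $\mu = \int_{\mathcal{E}} \nu\,\mathrm{d}\rho(\nu)$ with respect to the shift $T^{*}$. Linearity of $\mu\mapsto m(\mu)$ gives $\int m(\nu)\,\mathrm{d}\rho(\nu)=\xi$. The representation (\ref{evetsh}) is the key ingredient here: because $\hat{q}$ is a single measurable function (independent of $\mu$) that coincides with $q_\nu(\cdot,z)$ $\nu$-a.s.\ for every $\nu\in\mathcal{I}$, the functional $\mathfrak{I}_a$ is affine linear on $\mathcal{I}$, whence
$$\mathfrak{I}_a(\mu) = \int_{\mathcal{E}} \mathfrak{I}_a(\nu)\,\mathrm{d}\rho(\nu).$$
Applying Varadhan's formula (\ref{divaneasiksh}) componentwise yields $\mathfrak{I}_a(\nu)\geq I_a(m(\nu))$ whenever $m(\nu)\neq 0$. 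Combining this with the convexity of $I_a$ (Theorem \ref{aLDPgeneric}) and Jensen's inequality applied to $\rho$, one obtains
$$I_a(\xi) = I_a\!\left(\int m(\nu)\,\mathrm{d}\rho(\nu)\right) \leq \int I_a(m(\nu))\,\mathrm{d}\rho(\nu) \leq \int \mathfrak{I}_a(\nu)\,\mathrm{d}\rho(\nu) = \mathfrak{I}_a(\mu),$$
which is the desired inequality.

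The main obstacle is the treatment of ergodic components $\nu$ with $m(\nu)=0$, since (\ref{divaneasiksh}) is stated only for non-zero drifts. In the nestling case this is painless because $I_a(0)=0$ by (\ref{bbqmangal}), so the trivial bound $\mathfrak{I}_a(\nu)\geq 0 = I_a(0)$ closes the gap in every step. In the non-nestling case (where $I_a(0)>0$), I would isolate the set $A=\{\nu:m(\nu)=0\}$ of $\rho$-mass $p$ (necessarily $p<1$ since $\xi\neq 0$), apply Jensen over the complement to produce $\bar\eta := (1-p)^{-1}\int_{A^{c}} m(\nu)\,\mathrm{d}\rho(\nu)$ with $\xi=(1-p)\bar\eta$, and exploit convexity of $I_a$ along the segment $[0,\bar\eta]$ together with the non-negativity of $\mathfrak{I}_a$ on $A$; any residual slack of the form $p\,I_a(0)$ can be closed via an approximation argument based on the lower semicontinuity of $I_a$ at $0$ and the positive drift baked into (\ref{nonmumu}), which rules out finite-$\mathfrak{I}_a$ ergodic components at zero drift.
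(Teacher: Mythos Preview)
Your overall strategy---ergodic decomposition, affine linearity of $\mathfrak{I}_a$ via (\ref{evetsh}), then Jensen with the convex $I_a$---is exactly the paper's route. The nestling branch is fine: there $I_a(0)=0$, so the trivial bound $\mathfrak{I}_a\geq 0$ on zero-drift components already closes the chain.

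The non-nestling branch, however, has a genuine gap. Your proposed fix asserts that the positive drift in (\ref{nonmumu}) ``rules out finite-$\mathfrak{I}_a$ ergodic components at zero drift.'' This is false. Uniform ellipticity (\ref{ellipticity}) forces $q(w,z)\geq\kappa$, so the per-step relative entropy obeys $\sum_z \hat q(w,z)\log\frac{\hat q(w,z)}{q(w,z)}\leq\log(1/\kappa)$, and hence $\mathfrak{I}_a(\nu)\leq\log(1/\kappa)<\infty$ for \emph{every} $\nu\in\mathcal{I}$, zero-drift ones included (and such $\nu$ exist in $W_\infty^{\mathrm{tr}}$; think of simple random walk in $d\geq3$). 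Neither lower semicontinuity of $I_a$ at $0$ nor the non-nestling hypothesis manufactures the missing $p\,I_a(0)$ out of the bare inequality $\mathfrak{I}_a\geq0$ on $A$.

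What is actually needed is the bound $\mathfrak{I}_a(\tilde\mu)\geq I_a(0)$ for any $\tilde\mu\in\mathcal{I}$ with $m(\tilde\mu)=0$. The paper gets this by gathering all zero-drift ergodic components into a single $\tilde\mu$ and invoking Lemma~7.2 of \cite{Raghu}; that inequality is a nontrivial input, not a consequence of ellipticity or of (\ref{nonmumu}). Once it is in place, the chain
\[
\mathfrak{I}_a(\mu)\;\geq\;\int_{\mathcal{E}_o}I_a(m(\alpha))\,\mathrm{d}\hat\mu(\alpha)\;+\;(1-\hat\mu(\mathcal{E}_o))\,I_a(0)\;\geq\;I_a(\xi)
\]
closes uniformly in both regimes.
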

\begin{proof}
By the definition of $I_a$ in (\ref{divaneasiksh}), $$I_a(\xi)\geq\inf_{\substack{\mu\in\mathcal{I}:\\m(\mu)=\xi}}\mathfrak{I}_a(\mu)$$ is clear. To establish the reverse inequality, take any $\mu\in\mathcal{I}$ with $m(\mu)=\xi$. Since $\mathcal{E}$ is the set of extremal points of $\mathcal{I}$, $\mu$ can be expressed as $$\mu=\int_{\mathcal{E}_o}\alpha\,\mathrm{d}\hat{\mu}(\alpha) + \int_{\mathcal{E}\backslash\mathcal{E}_o}\alpha\,\mathrm{d}\hat{\mu}(\alpha)=\int_{\mathcal{E}_o}\alpha\,\mathrm{d}\hat{\mu}(\alpha) + (1-\hat{\mu}(\mathcal{E}_o))\tilde{\mu}$$ where $\mathcal{E}_o:=\{\alpha\in\mathcal{E}:m(\alpha)\neq0\}$, $\hat{\mu}$ is some probability measure on $\mathcal{E}$, and $\tilde{\mu}\in\mathcal{I}$ with $m(\tilde{\mu})=0$. Then,
\begin{align}
\mathfrak{I}_a(\mu)&=\int_{\mathcal{E}_o}\mathfrak{I}_a(\alpha)\,\mathrm{d}\hat{\mu}(\alpha) + (1-\hat{\mu}(\mathcal{E}_o))\mathfrak{I}_a(\tilde{\mu})\label{miyash}\\
&\geq\int_{\mathcal{E}_o}I_a(m(\alpha))\,\mathrm{d}\hat{\mu}(\alpha) + (1-\hat{\mu}(\mathcal{E}_o))I_a(0)\label{miyaash}\\
&\geq I_a(\xi).\label{miyaaash}
\end{align} The equality in (\ref{miyash}) uses the affine linearity of $\mathfrak{I}_a$. (\ref{miyaash}) follows from two facts: (i) $\mathfrak{I}_a(\alpha)\geq I_a(m(\alpha))$, and (ii) $\mathfrak{I}_a(\tilde{\mu})\geq I_a(0)$. The first fact is immediate from the definition of $I_a$. See Lemma 7.2 of \cite{Raghu} for the proof of the second fact. Finally, the convexity of $I_a$ gives (\ref{miyaaash}).
\end{proof}

\begin{lemma}\label{tugish}
If $I_a$ is strictly convex at $\xi$, then the infimum in (\ref{divaneasiksh}) is attained.
\end{lemma}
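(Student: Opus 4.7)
The plan is to first produce a minimizer in the larger class $\mathcal{I}$ of $T^*$-invariant measures by a weak compactness and lower semicontinuity argument, and then to use strict convexity of $I_a$ at $\xi$ together with the ergodic decomposition to promote such a minimizer to one in $\mathcal{E}$.

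For the first step, I would start with a minimizing sequence $\mu_n\in\mathcal{I}$ with $m(\mu_n)=\xi$ and $\mathfrak{I}_a(\mu_n)\to I_a(\xi)$, which exists by the preceding lemma. Since the nearest-neighbor condition encodes paths in $W_\infty^{\mathrm{tr}}$ by their increments in the compact space $U^{\mathbb{N}}$, the sequence is automatically tight; extract a subsequential weak limit $\mu^*$. Invariance under $T^*$ passes to the limit, and $m(\mu^*)=\xi$ because $w\mapsto x_o-x_{-1}$ is bounded and continuous. Lower semicontinuity of $\mathfrak{I}_a$---viewed as the relative entropy of the one-step joint distribution $\mu\otimes q_\mu$ with respect to the reference measure $\mu\otimes q$---then yields $\mathfrak{I}_a(\mu^*)\leq I_a(\xi)$, and by the preceding lemma this must be an equality.

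For the second step, I would invoke the ergodic decomposition $\mu^*=\int_{\mathcal{E}_o}\alpha\,\mathrm{d}\hat{\mu}(\alpha)+(1-\hat{\mu}(\mathcal{E}_o))\tilde{\mu}$ used above. Since $\mathfrak{I}_a(\mu^*)=I_a(\xi)$, every inequality in the chain (\ref{miyash})--(\ref{miyaaash}) becomes an equality. The last step is Jensen's inequality for $I_a$ at the barycenter $\xi=\int m(\alpha)\,\mathrm{d}\hat{\mu}(\alpha)+(1-\hat{\mu}(\mathcal{E}_o))\cdot 0$, so strict convexity of $I_a$ at $\xi$---combined with $\xi\neq 0$---will force $\hat{\mu}(\mathcal{E}_o)=1$ and $m(\alpha)=\xi$ for $\hat{\mu}$-a.e.\ $\alpha$. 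Equality in the middle step then gives $\mathfrak{I}_a(\alpha)=I_a(m(\alpha))=I_a(\xi)$ for $\hat{\mu}$-a.e.\ $\alpha$, and any such $\alpha\in\mathcal{E}$ is the desired minimizer.

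The main obstacle will be executing the first step cleanly: I need to check that the weak limit $\mu^*$ actually lives on the transient path space $W_\infty^{\mathrm{tr}}$ (rather than charging paths that return to the origin infinitely often), and that $\mathfrak{I}_a$ is genuinely lower semicontinuous along $\mu_n$ even though $q_\mu$ itself depends on $\mu$. I expect both to follow from the uniform entropy bound on $\mu_n$ combined with the regeneration estimates of Theorem \ref{bombagibi}---which preclude concentration on non-transient paths---together with the standard joint lower semicontinuity of relative entropy applied to the canonical one-step extension of $\mu$; these ingredients are essentially already present in Varadhan's construction of $\mathfrak{I}_a$ in \cite{Raghu}.
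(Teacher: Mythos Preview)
Your overall strategy---compactness plus lower semicontinuity to get a minimizer over $\mathcal{I}$ (or a larger class), then ergodic decomposition plus strict convexity to extract a minimizer in $\mathcal{E}$---is exactly the paper's. But the ``main obstacle'' you flag is a genuine gap, and your proposed resolution does not work.

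The difficulty is precisely the one the paper singles out: the reference kernel $q(\cdot,z)$ of (\ref{ozgurevren}) is \emph{not} continuous on $W_\infty^{\mathrm{tr}}$ in the product topology (equivalently, on $U^{\mathbb{N}}$). Two paths that agree on a long recent stretch are close in that topology regardless of how many visits to the origin occur in the remote past, yet $q(w,z)$ depends on those visit counts $n_{o,z'}$. So the map $\mu\mapsto\mu\otimes q$ is not continuous, and the standard joint lower semicontinuity of relative entropy does not give you lower semicontinuity of $\mathfrak{I}_a$ along your sequence. Your hope that the regeneration estimates of Theorem~\ref{bombagibi} rule out mass escaping to non-transient paths is also misplaced: those estimates control $\tau_1$ under $P_o$, not under an arbitrary $\mu_n\in\mathcal{I}$, so they say nothing about where the limit $\mu^*$ sits.

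The paper's fix is to abandon the product-topology compactification entirely and work instead in Varadhan's compactification $\overline{W}$ from Section~5 of \cite{Raghu}, to which $q(\cdot,z)$ extends continuously; lower semicontinuity of $\mathfrak{I}_a$ then holds on $\overline{\mathcal{I}}$. The limit $\overline{\mu}$ need not be supported on $W_\infty^{\mathrm{tr}}$ at all---one invokes Corollary~6.2 of \cite{Raghu} to decompose it as a part coming from $\mathcal{E}_o$ plus a zero-drift remainder in $\overline{\mathcal{I}}$, and then your second step (strict convexity forcing the remainder to have zero mass) goes through verbatim.
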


\begin{proof}
Let $W_n := \{\left(x_i\right)_{-n\leq i\leq0}:x_{i+1}-x_i\in U,\,x_o=0\}$. The simplest compactification of $W:=\cup_{n}W_n$ is $W_\infty:=\{\left(x_i\right)_{i\leq0}:x_{i+1}-x_i\in U,\,x_o=0\}$ with the product topology. However, the functions $q(\cdot,z)$ (recall (\ref{ozgurevren})) are only defined on $W_{\infty}^{\mathrm{tr}}$, and even when restricted to it they are not continuous since two walks that are identical in the immediate past are close to each other in this topology even if one of them visits $0$ in the remote past and the other one doesn't.

Section 5 of \cite{Raghu} introduces a more convenient compactification $\overline{W}$ of $W$. The functions $q(\cdot,z)$ continuously extend from $W$ to $\overline{W}$. Denote the $T^*$-invariant probability measures on $\overline{W}$ by $\overline{\mathcal{I}}$, and the extremals of $\overline{\mathcal{I}}$ by $\overline{\mathcal{E}}$. Recall that $\mathcal{E}_o:=\{\alpha\in\mathcal{E}:m(\alpha)\neq0\}$. Then, $\mathcal{E}_o\subset\mathcal{E}\subset\overline{\mathcal{E}}$ and $\mathcal{I}\subset\overline{\mathcal{I}}$. Note that the domain of the formula for $\mathfrak{I}_a$ given in (\ref{evetsh}) extends to $\overline{\mathcal{I}}$.

Take $\mu_n\in\mathcal{E}$ such that $m(\mu_n)=\xi$ and $\mathfrak{I}_a(\mu_n)\to I_a(\xi)$ as $n\to\infty$. Let $\overline{\mu}\in\overline{\mathcal{I}}$ be a weak limit point of $\mu_n$. Corollary 6.2 of \cite{Raghu} shows that $\overline{\mu}$ has a representation $$\overline{\mu}=\int_{\mathcal{E}_o}\alpha\,\mathrm{d}\hat{\mu}_1(\alpha) + (1-\hat{\mu}_1(\mathcal{E}_o))\overline{\mu}_2$$ where $\hat{\mu}_1$ is some probability measure on $\mathcal{E}_o$, and $\overline{\mu}_2\in\overline{\mathcal{I}}$ with $m(\overline{\mu}_2)=0$. Then,
\begin{align}
I_a(\xi)=\lim_{n\to\infty}\mathfrak{I}_a(\mu_n)&\geq\mathfrak{I}_a(\overline{\mu})\label{viysh}\\
&=\int_{\mathcal{E}_o}\mathfrak{I}_a(\alpha)\,\mathrm{d}\hat{\mu}_1(\alpha) + (1-\hat{\mu}_1(\mathcal{E}_o))\mathfrak{I}_a(\overline{\mu}_2)\label{viyash}\\
&\geq\int_{\mathcal{E}_o}I_a(m(\alpha))\,\mathrm{d}\hat{\mu}_1(\alpha) + (1-\hat{\mu}_1(\mathcal{E}_o))I_a(0)\label{viyaash}\\
&\geq I_a(\xi).\label{viyaaash}
\end{align} The inequality in (\ref{viysh}) follows from the lower semicontinuity of $\mathfrak{I}_a$, and the equality in (\ref{viyash}) is a consequence of the affine linearity of $\mathfrak{I}_a$. (\ref{viyaash}) relies on the fact that $\mathfrak{I}_a(\overline{\mu}_2)\geq I_a(0)$. (See Lemma 7.2 of \cite{Raghu} for the proof.) Finally, the convexity of $I_a$ gives (\ref{viyaaash}). Since $I_a$ is assumed to be strictly convex at $\xi$, $\hat{\mu}_1\left(\alpha\in\mathcal{E}_o: m(\alpha)=\xi,\, \mathfrak{I}_a(\alpha)=I_a(\xi)\right)=1$. Hence, we are done.
\end{proof}

\subsection{Formula for the unique minimizer}

Fix any $\xi\in\mathcal{A}$. Recall Definition \ref{definemuyuanan} and Theorem \ref{sevval}.

\begin{proposition}\label{bursayok}
$\bar{\mu}_\xi^\infty$ is well defined.
\end{proposition}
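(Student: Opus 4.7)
The plan is to verify four things in order: (i) the denominator $D := E_o[\tau_1 \exp\{\langle\theta,X_{\tau_1}\rangle - \Lambda_a(\theta)\tau_1\}|\beta=\infty]$ is in $(0,\infty)$; (ii) for every bounded cylinder $f$, the numerator is finite; (iii) the ratio is independent of the choice of $K$; and (iv) the prescription extends to a \emph{bona fide} probability measure on $U^{\mathbb{N}}$. Since $\tau_1 \geq 1$, Lemma \ref{phillysh_one} gives $D \geq E_o[\exp\{\langle\theta,X_{\tau_1}\rangle-\Lambda_a(\theta)\tau_1\}|\beta=\infty]=1$, and $D < \infty$ was already shown inside the proof of Lemma \ref{Caveraged} (by Theorem \ref{bombagibi}(c) in the $d\geq2$ case, and by the estimate $\sup_{t\geq 0} t e^{-\Lambda_a(\theta)t} = (e\Lambda_a(\theta))^{-1}$ together with (\ref{moment}) in the nestling $d=1$ case).

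For (ii), bound $|f|\leq\|f\|_\infty$ to reduce to $\|f\|_\infty\, E_o[\tau_1 \exp\{\langle\theta,X_{\tau_K}\rangle - \Lambda_a(\theta)\tau_K\}|\beta=\infty]$. Writing $X_{\tau_K}=X_{\tau_1}+(X_{\tau_K}-X_{\tau_1})$ and $\tau_K=\tau_1+(\tau_K-\tau_1)$, the regeneration structure recalled in Section \ref{regtimes} makes the second summands independent of $(X_{\tau_1},\tau_1)$ under $P_o(\cdot\mid\beta=\infty)$, with the same law as $(X_{\tau_{K-1}},\tau_{K-1})$; their exponential factor has expectation $E_o[\exp\{\langle\theta,X_{\tau_1}\rangle-\Lambda_a(\theta)\tau_1\}|\beta=\infty]^{K-1}=1$ by Lemma \ref{phillysh_one}. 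Thus the numerator is at most $\|f\|_\infty\cdot D<\infty$.

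For (iii), take $K_1<K_2$ both admissible for $f$. Since $f((Z_{j+i})_{i\geq 1})$ depends only on $Z_{j+1},\ldots,Z_{j+K_1}$ and $j+K_1\leq(\tau_1-1)+K_1\leq \tau_{K_1}$ for every $0\leq j\leq\tau_1-1$ (using $\tau_{K_1}\geq\tau_1+K_1-1$), the sum $S:=\sum_{j=0}^{\tau_1-1} f((Z_{j+i})_{i\geq 1})$ is measurable with respect to $\sigma(X_0,\ldots,X_{\tau_{K_1}})$. Splitting $\exp\{\langle\theta,X_{\tau_{K_2}}\rangle-\Lambda_a(\theta)\tau_{K_2}\}$ at $\tau_{K_1}$ and applying the renewal independence together with Lemma \ref{phillysh_one} exactly as above, the factor past $\tau_{K_1}$ has conditional expectation $1$, so the numerator computed with $K_2$ equals the one computed with $K_1$.

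Finally, for (iv), for each $K\geq 1$ define the law $\nu_K$ on $U^K$ by
\[
\nu_K(g):=\frac{1}{D}\,E_o\!\left[\left.\sum_{j=0}^{\tau_1-1} g(Z_{j+1},\ldots,Z_{j+K})\,\exp\{\langle\theta,X_{\tau_K}\rangle-\Lambda_a(\theta)\tau_K\}\,\right|\,\beta=\infty\right].
\]
Step (ii) shows $\nu_K$ extends continuously to all bounded $g$, and positivity is obvious. Taking $g\equiv 1$ yields total mass $E_o[\tau_1\exp\{\langle\theta,X_{\tau_K}\rangle-\Lambda_a(\theta)\tau_K\}|\beta=\infty]/D=1$ by the renewal-plus-Lemma \ref{phillysh_one} computation from (ii). The Kolmogorov consistency $\nu_{K+1}|_{U^K}=\nu_K$ is precisely the content of (iii). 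Kolmogorov's extension theorem then produces a unique probability measure $\bar{\mu}_\xi^\infty$ on $U^{\mathbb{N}}$ with these finite-dimensional marginals, and by construction it assigns the prescribed values to all bounded cylinder $f$. The crux throughout is the $K$-independence in (iii), which relies simultaneously on the i.i.d.\ block structure from Section \ref{regtimes} and on the sharp identity $E_o[\exp\{\langle\theta,X_{\tau_1}\rangle-\Lambda_a(\theta)\tau_1\}|\beta=\infty]=1$ furnished by Lemma \ref{phillysh_one} for $\theta\in\mathcal{C}$; without the latter the exponential weight would not behave like a unit-mean martingale across regenerations and both normalization and consistency would fail.
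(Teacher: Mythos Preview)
Your proof is correct and its core (step (iii)) is exactly the paper's argument: split the exponential weight at a regeneration time, use the i.i.d.\ block structure under $P_o(\cdot\mid\beta=\infty)$, and invoke Lemma~\ref{phillysh_one} to see that the extra factor has expectation~$1$. The paper interprets ``well defined'' narrowly as $K$-independence and proves only that, whereas you also verify finiteness of numerator and denominator and carry out the Kolmogorov extension explicitly; these additions are correct and make the argument more self-contained, but the essential idea is the same.
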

\begin{proof}
For every $K\in\mathbb{N}$, take any bounded function $f:U^{\mathbb{N}}\rightarrow\mathbb{R}$ such that $f((z_i)_{i\geq1})$ is independent of $(z_i)_{i>K}$. Then, $f((z_i)_{i\geq1})$ is independent of $(z_i)_{i>K'}$ for every $K'>K$ as well. So, we need to show that (\ref{muyucananan}) does not change if we replace $K$ by $K+1$. But, this is clear because
\begin{align*}
&E_o\left[\left.\sum_{j=0}^{\tau_1 -1}f((Z_{j+i})_{i\geq1})\ \exp\{\langle\theta,X_{\tau_{K+1}}\rangle - \Lambda_a(\theta)\tau_{K+1}\}\,\right|\,\beta=\infty\right]\\
&\qquad=E_o\left[\left.\sum_{j=0}^{\tau_1 -1}f((Z_{j+i})_{i\geq1})\ \exp\{\langle\theta,X_{\tau_{K}}\rangle - \Lambda_a(\theta)\tau_{K}\}\left\{\mathrm{e}^{\langle\theta,X_{\tau_{K+1}}-X_{\tau_K}\rangle - \Lambda_a(\theta)(\tau_{K+1}-\tau_K)}\right\}\,\right|\,\beta=\infty\right]\\
&\qquad=E_o\left[\left.\sum_{j=0}^{\tau_1 -1}f((Z_{j+i})_{i\geq1})\ \exp\{\langle\theta,X_{\tau_{K}}\rangle - \Lambda_a(\theta)\tau_{K}\}\,\right|\,\beta=\infty\right].
\end{align*}Explanation: In the second line of the display above, the term in $\{\cdot\}$ is independent of the others. The expectation therefore splits, and Lemma \ref{phillysh_one} implies that
\begin{align*}
&E_o\left[\left.\exp\{\langle\theta,X_{\tau_{K+1}}-X_{\tau_K}\rangle - \Lambda_a(\theta)(\tau_{K+1}-\tau_K)\}\,\right|\,\beta=\infty\right]\\
&\quad=E_o\left[\left.\exp\{\langle\theta,X_{\tau_1}\rangle - \Lambda_a(\theta)\tau_1\}\,\right|\,\beta=\infty\right]=1.\qedhere
\end{align*}
\end{proof}

The following theorem states that the empirical process $$\bar{\nu}_{n,X}^\infty := \frac{1}{n}\sum_{j=0}^{n-1}\one_{\left(Z_{j+i}\right)_{i\geq1}}$$ of the walk under $P_o$ converges to $\bar{\mu}_\xi^\infty$ when the particle is conditioned to have mean velocity $\xi$. Here, $Z_i = X_i-X_{i-1}$.

\begin{theorem}\label{averagedconditioningsh}
For every $K\in\mathbb{N}$, $f:U^{\mathbb{N}}\rightarrow\mathbb{R}$ such that $f((z_i)_{i\geq1})$ is independent of $(z_i)_{i>K}$ and bounded, and $\epsilon>0$,
\[\limsup_{\delta\to0}\limsup_{n\rightarrow\infty}\frac{1}{n}\log P_o\left(\ \left|\int\!\! f\mathrm{d}\bar{\nu}_{n,X}^\infty-\int\!\! f\mathrm{d}\bar{\mu}_\xi^\infty\right|>\epsilon\ \left|\ |\frac{X_n}{n}-\xi|\leq\delta\right.\right)<0.\]
\end{theorem}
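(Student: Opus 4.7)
The plan is to use a classical change-of-measure argument based on the regeneration structure, tilting by the exponential martingale $\exp\{\langle\theta,X_n\rangle - \Lambda_a(\theta) n\}$ with $\theta \in \mathcal{C}$ chosen so that $\nabla\Lambda_a(\theta)=\xi$. Since $\bigl(X_{\tau_{m+1}}-X_{\tau_m},\tau_{m+1}-\tau_m\bigr)_{m\geq 1}$ is i.i.d.\ under $P_o$ with the law of $(X_{\tau_1},\tau_1)$ under $P_o(\,\cdot\,|\,\beta=\infty)$, Lemma~\ref{phillysh_one} says that multiplying this i.i.d.\ law by the density $\exp\{\langle\theta,X_{\tau_1}\rangle-\Lambda_a(\theta)\tau_1\}$ yields a probability measure. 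This defines a tilted measure $\hat{P}_\theta$ under which the post-$\tau_1$ blocks remain i.i.d., but now with a tilted block law.

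Under $\hat{P}_\theta$, the classical strong law for i.i.d.\ sums combined with (\ref{gradientsh}) gives $X_n/n \to \xi$ a.s., and a block-renewal computation identifies the a.s.\ limit of $\int f\,\mathrm{d}\bar{\nu}_{n,X}^\infty$ as
\[
\frac{\hat{E}_\theta\!\left[\sum_{j=0}^{\tau_2-\tau_1-1} f\bigl((Z_{\tau_1+j+i})_{i\geq 1}\bigr)\right]}{\hat{E}_\theta[\tau_2-\tau_1]},
\]
which, after unfolding the density and using that $f$ depends only on $K$ coordinates (so Lemma~\ref{phillysh_one} lets one insert the factor $\exp\{\langle\theta,X_{\tau_K}-X_{\tau_1}\rangle-\Lambda_a(\theta)(\tau_K-\tau_1)\}$ harmlessly into the numerator), coincides with $\int f\,\mathrm{d}\bar{\mu}_\xi^\infty$ by Definition~\ref{definemuyuanan}. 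Moreover, because the block variables $\tau_{m+1}-\tau_m$ and $\sum_{j=\tau_m}^{\tau_{m+1}-1} f((Z_{j+i})_{i\geq 1})$ have finite exponential moments under $\hat{P}_\theta$ (by (\ref{moment}) or (\ref{isimlazimkesin}) together with boundedness of $f$), Cramér's theorem applied blockwise, plus a standard partial-block correction controlling $n-\tau_{M_n}$ where $M_n$ is the number of completed regenerations by time $n$, gives
\[
\limsup_{n\to\infty}\frac{1}{n}\log \hat{P}_\theta\!\left(\left|\int\!\! f\,\mathrm{d}\bar{\nu}_{n,X}^\infty - \int\!\! f\,\mathrm{d}\bar{\mu}_\xi^\infty\right| > \epsilon\right) < 0.
\]

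To translate this back to $P_o$, I write the conditional probability as
\[
P_o\!\left(A_{n,\epsilon}\,\big|\,|X_n/n-\xi|\leq\delta\right) = \frac{\hat{E}_\theta\!\left[\one_{A_{n,\epsilon}}\one_{|X_n/n-\xi|\leq\delta}\, e^{-\langle\theta,X_n\rangle+\Lambda_a(\theta) n}\,R_n\right]}{\hat{E}_\theta\!\left[\one_{|X_n/n-\xi|\leq\delta}\, e^{-\langle\theta,X_n\rangle+\Lambda_a(\theta) n}\,R_n\right]},
\]
where $A_{n,\epsilon}$ is the event in the theorem and $R_n$ is the correction coming from the first regeneration block (which has a different distribution, but whose density is bounded by an integrable random variable independent of $n$). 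On the event $|X_n/n-\xi|\leq\delta$ the exponential factors in numerator and denominator differ by a multiplicative error at most $e^{2n|\theta|\delta}$, and the first-block correction contributes $e^{o(n)}$. Since $\hat{P}_\theta(|X_n/n-\xi|\leq\delta)\to 1$ for each fixed $\delta>0$ by the LLN under $\hat{P}_\theta$, the previous display yields
\[
\limsup_{n\to\infty}\frac{1}{n}\log P_o\!\left(A_{n,\epsilon}\,\big|\,|X_n/n-\xi|\leq\delta\right) \leq 2|\theta|\delta + \limsup_{n\to\infty}\frac{1}{n}\log \hat{P}_\theta(A_{n,\epsilon}) < 2|\theta|\delta - c(\epsilon),
\]
for some $c(\epsilon)>0$, and letting $\delta\to 0$ finishes the proof.

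The main obstacle I anticipate is the exponential concentration under $\hat{P}_\theta$ (the first displayed limsup above): one must pass from an i.i.d.\ Cramér-type statement at the level of regeneration blocks to one at the level of the time variable $n$, and simultaneously handle the partial block $(Z_i)_{\tau_{M_n}<i\leq n}$ on which the functional $\int f\,\mathrm{d}\bar{\nu}_{n,X}^\infty$ still depends. The finite exponential moments of $\tau_1$ under $\hat{P}_\theta$ make this manageable but require a careful truncation argument. Once this is in place, the change-of-measure mechanism and the identification of the limiting empirical measure are routine consequences of the renewal structure already developed in Section~\ref{aLDPregularitysection}.
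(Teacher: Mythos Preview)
Your approach is genuinely different from the paper's and is conceptually sound, but two technical points are more delicate than your sketch suggests.

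First, the block observables $G_m:=\sum_{j=\tau_m}^{\tau_{m+1}-1}f((Z_{j+i})_{i\geq1})$ are \emph{not} i.i.d.\ under $\hat P_\theta$: since $f$ reads $K$ steps into the future, $G_m$ depends on blocks $m,\ldots,m+K-1$, so $(G_m)_{m\geq1}$ is only $(K-1)$-dependent. Cram\'er therefore does not apply directly; you need a super-blocking or H\"older step to decouple. The paper handles exactly this by splitting $\sum G_m$ into $K$ interlaced subsums and applying H\"older's inequality.

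Second, your correction factor $R_n$ is not determined by the first block alone: the Radon--Nikodym derivative on $\sigma(X_1,\ldots,X_{n+K-1})$ equals $\exp\{-\langle\theta,X_{\tau_{M+1}}-X_{\tau_1}\rangle+\Lambda_a(\theta)(\tau_{M+1}-\tau_1)\}$, where $\tau_M\leq n+K-1<\tau_{M+1}$, so $R_n$ also carries an overshoot term from the \emph{last} block containing time $n$. Bounding $\hat E_\theta[\one_{A_{n,\epsilon}}R_n]$ by H\"older then requires $\hat E_\theta[R_n^q]$ to be bounded in $n$ for some $q>1$, and in the nestling case the first-block factor $e^{-\Lambda_a(\theta)\tau_1}$ (or, if you tilt the first block, its inverse) is problematic because $\tau_1$ under $P_o$ lacks exponential moments. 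This can be repaired (e.g.\ by treating the denominator via the LDP rather than via $\hat P_\theta$, and by truncating on $\{\tau_1\leq n^{1/2}\}$), but it is not as innocuous as ``an integrable random variable independent of $n$''.

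By contrast, the paper avoids defining $\hat P_\theta$ altogether: it bounds the unconditioned quantity
\[
E_o\!\left[\exp\{\langle\theta,X_n\rangle-\Lambda_a(\theta)n+ns\langle g,\bar\nu_{n,X}^\infty\rangle\}\right]
\]
directly (decomposing over the regeneration index and using H\"older to separate the $K$-interlaced subsums), shows this is $\exp\{n\,o(s)\}$ because the derivative at $s=0$ vanishes by the very definition of $\bar\mu_\xi^\infty$, and then applies Chebyshev together with the averaged LDP for $P_o(|X_n/n-\xi|\leq\delta)$. This sidesteps both difficulties above: the H\"older step absorbs the $K$-dependence, and working with an expectation rather than a density means the first and last blocks contribute only fixed multiplicative constants with no moment issues. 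Your route would ultimately work, but the paper's is shorter precisely because it never needs the inverse density $R_n$.
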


\begin{proof}[Proof in the non-nestling case]
Since $\xi\in\mathcal{A}$, there exists a unique $\theta\in\mathcal{C}$ such that $\xi=\nabla\Lambda_a(\theta)$. Let $g(\cdot):=f(\cdot)-\int\!\! f\mathrm{d}\bar{\mu}_\xi^\infty$. Assume WLOG that $|g|\leq 1$. Then, $\int\!\! f\mathrm{d}\bar{\nu}_{n,X}^\infty-\int\!\! f\mathrm{d}\bar{\mu}_\xi^\infty = \int\!\! g\,\mathrm{d}\bar{\nu}_{n,X}^\infty =: \langle g,\bar{\nu}_{n,X}^\infty\rangle$. For any $s\in\mathbb{R}$,
\begin{align}
&E_o\left[\exp\{\langle\theta,X_n\rangle-\Lambda_a(\theta)n+ns\langle g,\bar{\nu}_{n,X}^\infty\rangle\}\right]\nonumber\\
&\qquad=E_o\left[n<\tau_{K+1},\,\exp\{\langle\theta,X_n\rangle-\Lambda_a(\theta)n+ns\langle g,\bar{\nu}_{n,X}^\infty\rangle\}\right]\label{cokazkaldish}\\&\qquad\quad+\sum_{m=K+1}^nE_o\left[\tau_m\leq n<\tau_{m+1},\,\exp\{\langle\theta,X_n\rangle-\Lambda_a(\theta)n+ns\langle g,\bar{\nu}_{n,X}^\infty\rangle\}\right]\nonumber.
\end{align}
If $|s|$ is small enough so that $2|\theta|+|s|<c_2$, then the first term in (\ref{cokazkaldish}) is bounded from above by $E_o\!\left[n<\tau_{K+1},\,\exp\{(2|\theta|+|s|)\tau_{K+1}\}\right]$ which goes to $0$ as $n\to\infty$ by Theorem \ref{bombagibi} and the monotone convergence theorem. For $j\geq0$, define
\begin{equation}\label{sabrialtintas}
G_j:=\sum_{k=\tau_j}^{\tau_{j+1}-1}g((Z_{k+i})_{i\geq1})
\end{equation} with the convention that $\tau_o=0$. Note that $G_j$ is a function of $Z_{\tau_j+1},\ldots,Z_{\tau_{j+1}+K-1}$. Therefore, $G_j$ and $G_{j+K}$ depend on disjoint sets of steps since $\tau_{j+1}+K-1\leq\tau_{j+K}$. For any $p,q\in\mathbb{R}$ with $1<p<c_2/2|\theta|$ and $1/p+1/q=1$, each term of the sum in (\ref{cokazkaldish}) can be bounded using H\"older's inequality:
\begin{align}
&E_o\left[\tau_m\leq n<\tau_{m+1},\,\exp\{\langle\theta,X_n\rangle-\Lambda_a(\theta)n+ns\langle g,\bar{\nu}_{n,X}^\infty\rangle\}\right]\nonumber\\
&\quad\leq E_o\left[\mathrm{e}^{\langle\theta,X_{\tau_m}-X_{\tau_1}\rangle-\Lambda_a(\theta)(\tau_m-\tau_1)+s\left(G_1+\cdots+G_{m-K-1}\right)+\left(2|\theta|+|s|\right)\left(\tau_1+\tau_{m+1}-\tau_m\right)+|s|\left(\tau_m-\tau_{m-K}\right)}\right]\nonumber\\
&\quad\leq E_o\left[\mathrm{e}^{\left(2|\theta|+|s|\right)\tau_1}\right]E_o\left[\mathrm{e}^{\langle\theta,X_{\tau_m}-X_{\tau_1}\rangle-\Lambda_a(\theta)(\tau_m-\tau_1)+p\left(2|\theta|+|s|\right)\left(\tau_{m+1}-\tau_m\right)+p|s|\left(\tau_m-\tau_{m-K}\right)}\right]^{1/p}\nonumber\\
&\quad\quad\times\prod_{i=1}^K E_o\left[\mathrm{e}^{\langle\theta,X_{\tau_m}-X_{\tau_1}\rangle-\Lambda_a(\theta)(\tau_m-\tau_1)+(Kq)s\left(G_i+G_{i+K}+\cdots+G_{i+[\frac{m-K-i-1}{K}]K}\right)}\right]^{1/(Kq)}\nonumber\\
&\quad\leq E_o\left[\exp\{\left(2|\theta|+|s|\right)\tau_1\}\right]E_o\left[\left.\exp\{p\left(2|\theta|+|s|\right)\tau_1\}\,\right|\,\beta=\infty\right]^{\frac{K+1}{p}}\label{cumartesish}\\
&\quad\quad\times E_o\left[\left.\exp\{\langle\theta,X_{\tau_K}\rangle-\Lambda_a(\theta)\tau_K+(Kq)sG_o\}\,\right|\,\beta=\infty\right]^{\frac{m-K-1}{Kq}}\!\!\nonumber.
\end{align}
The last inequality follows from the fact that $(G_i,G_{i+K},\ldots)$ is an i.i.d.\ sequence. The terms of the product in (\ref{cumartesish}) are finite by Theorem \ref{bombagibi} if $p(2|\theta|+|s|)<c_2$ and $2|\theta|+(Kq)|s|<c_2$.  Putting the pieces together,
\begin{align*}
&\limsup_{n\to\infty}\frac{1}{n}\log E_o\left[\exp\{\langle\theta,X_n\rangle-\Lambda_a(\theta)n+ns\langle g,\bar{\nu}_{n,X}^\infty\rangle\}\right]\\
&\qquad\leq0\vee\limsup_{n\to\infty}\frac{1}{n}\log\!\!\!\sum_{m=K+1}^n \!\!E_o\left[\left.\exp\{\langle\theta,X_{\tau_K}\rangle-\Lambda_a(\theta)\tau_K+(Kq)sG_o\}\,\right|\,\beta=\infty\right]^{\frac{m-K-1}{Kq}}\\
&\qquad\leq0\vee\frac{1}{Kq}\log E_o\left[\left.\exp\{\langle\theta,X_{\tau_K}\rangle-\Lambda_a(\theta)\tau_K+(Kq)sG_o\}\,\right|\,\beta=\infty\right].
\end{align*}
Let $h(s):=\frac{1}{Kq}\log E_o\left[\left.\exp\{\langle\theta,X_{\tau_K}\rangle-\Lambda_a(\theta)\tau_K+(Kq)sG_o\}\,\right|\,\beta=\infty\right]$. Lemma \ref{phillysh_one} implies that $h(0)=0$. The map $s\mapsto h(s)$ is analytic in a neighborhood of $0$, and
\begin{align*}
h'(0)&=E_o\left[\left.G_o\,\exp\{\langle\theta,X_{\tau_K}\rangle-\Lambda_a(\theta)\tau_K\}\,\right|\,\beta=\infty\right]\\
&=E_o\left[\left.\sum_{k=0}^{\tau_{1}-1}g((Z_{k+i})_{i\geq1})\,\exp\{\langle\theta,X_{\tau_K}\rangle-\Lambda_a(\theta)\tau_K\}\,\right|\,\beta=\infty\right]\\
&=E_o\left[\left.\left(\sum_{k=0}^{\tau_{1}-1}f((Z_{k+i})_{i\geq1})-\tau_1\int\!\! f\mathrm{d}\bar{\mu}_\xi^\infty\right)\exp\{\langle\theta,X_{\tau_K}\rangle-\Lambda_a(\theta)\tau_K\}\,\right|\,\beta=\infty\right]=0
\end{align*}
by Definition \ref{definemuyuanan}. We conclude that
\begin{equation}\label{yilish}
\limsup_{n\to\infty}\frac{1}{n}\log E_o\left[\exp\{\langle\theta,X_n\rangle-\Lambda_a(\theta)n+ns\langle g,\bar{\nu}_{n,X}^\infty\rangle\}\right]\leq o(s).
\end{equation}

Whenever $s>0$ is small enough, Chebyshev's inequality and the averaged LDP give
\begin{align*}
&\limsup_{n\rightarrow\infty}\frac{1}{n}\log P_o\left(\ \int\!\! f\mathrm{d}\bar{\nu}_{n,X}^\infty-\int\!\! f\mathrm{d}\bar{\mu}_\xi^\infty>\epsilon\ \left|\ |\frac{X_n}{n}-\xi|\leq\delta\right.\right)\\
&\quad=\limsup_{n\rightarrow\infty}\frac{1}{n}\log P_o\left(\langle g,\bar{\nu}_{n,X}^\infty\rangle>\epsilon,|\frac{X_n}{n}-\xi|\leq\delta\right)-\lim_{n\rightarrow\infty}\frac{1}{n}\log P_o\left(|\frac{X_n}{n}-\xi|\leq\delta\right)\\
&\quad\leq\limsup_{n\rightarrow\infty}\frac{1}{n}\log E_o\left[\exp\{\langle\theta,X_n\rangle\},\langle g,\bar{\nu}_{n,X}^\infty\rangle>\epsilon,|\frac{X_n}{n}-\xi|\leq\delta\right]-\langle\theta,\xi\rangle + I_a(\xi) + |\theta|\delta\\
&\quad\leq\limsup_{n\rightarrow\infty}\frac{1}{n}\log E_o\left[\exp\{\langle\theta,X_n\rangle-\Lambda_a(\theta)n\},\langle g,\bar{\nu}_{n,X}^\infty\rangle>\epsilon\right] + |\theta|\delta\\
&\quad\leq\limsup_{n\to\infty}\frac{1}{n}\log E_o\left[\exp\{\langle\theta,X_n\rangle-\Lambda_a(\theta)n+ns\langle g,\bar{\nu}_{n,X}^\infty\rangle\}\right] - s\epsilon + |\theta|\delta\\
&\quad\leq\,o(s) - s\epsilon + |\theta|\delta\\
&\quad\leq-s\epsilon/2+ |\theta|\delta\end{align*} for every $\delta>0$. Similarly, $$\limsup_{n\rightarrow\infty}\frac{1}{n}\log P_o\left(\ \int\!\! f\mathrm{d}\bar{\nu}_{n,X}^\infty-\int\!\! f\mathrm{d}\bar{\mu}_\xi^\infty<-\epsilon\ \left|\ |\frac{X_n}{n}-\xi|\leq\delta\right.\right)\leq-s\epsilon/2+ |\theta|\delta.$$By combining these two bounds, we finally deduce that \[\limsup_{\delta\to0}\limsup_{n\rightarrow\infty}\frac{1}{n}\log P_o\left(\ \left|\int\!\! f\mathrm{d}\bar{\nu}_{n,X}^\infty-\int\!\! f\mathrm{d}\bar{\mu}_\xi^\infty\right|>\epsilon\ \left|\ |\frac{X_n}{n}-\xi|\leq\delta\right.\right)\leq-s\epsilon/2.\qedhere\]
\end{proof}

\begin{proof}[Proof in the nestling case]
Since $\xi\in\mathcal{A}$, there exists a unique $\theta\in\mathcal{C}$ such that $\Lambda_a(\theta)>0$ and $\xi=\nabla\Lambda_a(\theta)$. If $0<s<\Lambda_a(\theta)$, then the first term in (\ref{cokazkaldish}) is bounded from above by $E_o\left[n<\tau_{K+1},\,\exp\{|\theta||X_n|\}\right]$ which goes to $0$ as $n\to\infty$ by (\ref{moment}) and the monotone convergence theorem.

For any $p,q$ with $1<p<c_1/|\theta|$ and $1/p+1/q=1$, each term of the sum in (\ref{cokazkaldish}) can be bounded using H\"older's inequality when $0<s<{\Lambda_a(\theta)}/{(p\vee Kq)}$:
\begin{align}
&E_o\left[\tau_m\leq n<\tau_{m+1},\,\exp\{\langle\theta,X_n\rangle-\Lambda_a(\theta)n+ns\langle g,\bar{\nu}_{n,X}^\infty\rangle\}\right]\nonumber\\
&\quad\leq E_o\left[\mathrm{e}^{\langle\theta,X_{\tau_1}\rangle+\langle\theta,X_{\tau_m}-X_{\tau_1}\rangle-\Lambda_a(\theta)(\tau_m-\tau_1)+s\left(G_1+\cdots+G_{m-1}\right)}\!\!\!\!\!\sup_{\tau_m\leq n<\tau_{m+1}}\!\!\!\!\!\mathrm{e}^{\langle\theta,X_n-X_{\tau_m}\rangle}\right]\nonumber\\
&\quad\leq E_o\left[\exp\{\langle\theta,X_{\tau_1}\rangle\}\right]E_o\left[\mathrm{e}^{\langle\theta,X_{\tau_m}-X_{\tau_1}\rangle-\Lambda_a(\theta)(\tau_m-\tau_1)+ps\left(G_{m-K}+\cdots+G_{m-1}\right)}\!\!\!\!\!\!\!\sup_{\tau_m\leq n<\tau_{m+1}}\!\!\!\!\!\!\!\mathrm{e}^{p\langle\theta,X_n-X_{\tau_m}\rangle}\right]^{1/p}\nonumber\\
&\quad\quad\times\prod_{i=1}^K E_o\left[\mathrm{e}^{\langle\theta,X_{\tau_m}-X_{\tau_1}\rangle-\Lambda_a(\theta)(\tau_m-\tau_1)+(Kq)s\left(G_i+G_{i+K}+\cdots+G_{i+[\frac{m-K-i-1}{K}]K}\right)}\right]^{1/(Kq)}\nonumber\\
&\quad\leq E_o\left[\exp\{\langle\theta,X_{\tau_1}\rangle\}\right]E_o\left[\left.\sup_{\tau_{K}\leq n<\tau_{K+1}}\!\!\!\!\exp\{p|\theta||X_n|\}\,\right|\,\beta=\infty\right]^{1/p}\label{sokush}\\
&\quad\quad\times E_o\left[\left.\exp\{\langle\theta,X_{\tau_K}\rangle-\Lambda_a(\theta)\tau_K+(Kq)sG_o\}\,\right|\,\beta=\infty\right]^{\frac{m-K-1}{Kq}}.\nonumber
\end{align}
The first two terms in (\ref{sokush}) are finite by (\ref{moment}). The last term in (\ref{sokush}) is equal to the last term in (\ref{cumartesish}). The rest of the argument is identical to the one given in the non-nestling case.
\end{proof}

\begin{proof}[Proof of Theorem \ref{sevval}]
Fix $\xi\in\mathcal{A}$. Take any $\alpha\in\mathcal{E}$ with $m(\alpha)=\xi$. The corresponding transient process $Q_\alpha$ induces a probability measure $\bar{\alpha}$ on $U^{\mathbb{N}}$ via the map $(x_i)_{i\in\mathbb{Z}}\mapsto(x_1-x_o,x_2-x_1,\ldots)$. If $\bar{\alpha}\neq\bar{\mu}_\xi^\infty$, then there exist $K\in\mathbb{N}$, $f:U^{\mathbb{N}}\rightarrow\mathbb{R}$ and $\epsilon>0$ such that $f((z_i)_{i\geq1})$ is bounded and independent of $(z_i)_{i>K}$, and $|\langle f,\bar{\alpha} - \bar{\mu}_\xi^\infty\rangle|>\epsilon$.

For every $w\in W_{\infty}^{tr}$, $m\in\mathbb{N}$, and $(x_1,x_2,\ldots,x_m)$ such that $(x_{i+1}-x_i)\in U$, it follows easily from (\ref{ellipticity}) that $$P_o\left(X_1=x_1,\ldots,X_m=x_m\right)\geq\kappa^LQ^w\left(X_1=x_1,\ldots,X_m=x_m\right)$$ if $(x_1,x_2,\ldots,x_m)$ intersects $w$ at most $L$ times. With this observation in mind, let $H(n,X)$ denote the number of times $(X_1,\ldots,X_n)$ intersects $(X_i)_{i\leq0}$. Since the walk under $Q_\alpha$ is transient in the $\xi$ direction, there exists a constant $L$ such that $\lim_{n\to\infty}Q_\alpha(H(n,X)\leq L)\geq1/2$. For notational convenience, let $$A_n^\delta:=\left\{|\langle f,\bar{\nu}_{n,X}^\infty - \bar{\mu}_\xi^\infty\rangle|>\epsilon,\,|\frac{X_n}{n}-\xi|\leq\delta,\,H(n+K,X)\leq L\right\}.$$ 

By Jensen's inequality,
\begin{align*}
&P_o\left(|\langle f,\bar{\nu}_{n,X}^\infty - \bar{\mu}_\xi^\infty\rangle|>\epsilon,\,|\frac{X_n}{n}-\xi|\leq\delta\right)\\
&\qquad\geq\kappa^L\sup_{w\in W_\infty^{\mathrm{tr}}}Q^w\left(A_n^\delta\right)\\
&\qquad\geq\kappa^L\int E^w\left[\one_{A_n^\delta}\right]\,\mathrm{d}\alpha(w)\\
&\qquad=\kappa^L\int E_\alpha^w\left[\one_{A_n^\delta}\,\left.\frac{\mathrm{d}Q^w}{\mathrm{d}Q_\alpha^w}\right|_{\sigma(Z_1,\ldots,Z_{n+K})}\right]\,\mathrm{d}\alpha(w)\\
&\qquad=\kappa^LQ_\alpha(A_n^\delta)\frac{1}{Q_\alpha(A_n^\delta)}\int_{A_n^\delta}\exp\left(-\log\frac{\mathrm{d}Q_\alpha^w}{\mathrm{d}Q^w}(z_1,\ldots,z_{n+K})\right)\mathrm{d}Q_\alpha(w,z_1,\ldots,z_{n+K})\\
&\qquad\geq\kappa^LQ_\alpha(A_n^\delta)\exp\left(-\frac{1}{Q_\alpha(A_n^\delta)}\int_{A_n^\delta}\!\!\!\log\frac{\mathrm{d}Q_\alpha^w}{\mathrm{d}Q^w}(z_1,\ldots,z_{n+K})\mathrm{d}Q_\alpha(w,z_1,\ldots,z_{n+K})\right).
\end{align*}Since $m(\alpha)=\xi$ and $|\langle f,\bar{\alpha} - \bar{\mu}_\xi^\infty\rangle|>\epsilon$, the $L^1$-ergodic theorem implies that $$\lim_{n\to\infty}Q_\alpha(A_n^\delta)=\lim_{n\to\infty}Q_\alpha(H(n+K,X)\leq L)\geq1/2.$$
Therefore,
\begin{align*}
&\liminf_{n\to\infty}\frac{1}{n}\log P_o\left(|\langle f,\bar{\nu}_{n,X}^\infty - \bar{\mu}_{\xi}^\infty\rangle|>\epsilon,\,|\frac{X_n}{n}-\xi|\leq\delta\right)\\
&\quad\geq-\limsup_{n\to\infty}\frac{1}{nQ_\alpha(A_n^\delta)}\int_{A_n^\delta}\log\frac{\mathrm{d}Q_\alpha^w}{\mathrm{d}Q^w}(z_1,\ldots,z_{n+K})\mathrm{d}Q_\alpha(w,z_1,\ldots,z_{n+K})\\
&\quad=-\int_{W_\infty^{\mathrm{tr}}}\left[\sum_{z\in U}q_\alpha(w,z)\log\frac{q_\alpha(w,z)}{q(w,z)}\right]\mathrm{d}\alpha(w)=-\mathfrak{I}_a(\alpha)
\end{align*} again by the $L^1$-ergodic theorem. Finally, Theorem \ref{averagedconditioningsh} and the averaged LDP give
\begin{align*}
0>&\limsup_{\delta\to0}\limsup_{n\rightarrow\infty}\frac{1}{n}\log P_o\left(\ \left|\int\!\! f\mathrm{d}\bar{\nu}_{n,X}^\infty-\int\!\! f\mathrm{d}\bar{\mu}_\xi^\infty\right|>\epsilon\ \left|\ |\frac{X_n}{n}-\xi|\leq\delta\right.\right)\\
=&\,I_a(\xi)+\limsup_{\delta\to0}\limsup_{n\rightarrow\infty}\frac{1}{n}\log P_o\left(\ \left|\int\!\! f\mathrm{d}\bar{\nu}_{n,X}^\infty-\int\!\! f\mathrm{d}\bar{\mu}_\xi^\infty\right|>\epsilon,\,|\frac{X_n}{n}-\xi|\leq\delta\right)\\
\geq&\,I_a(\xi)-\mathfrak{I}_a(\alpha).
\end{align*} In words, $\alpha$ is not the minimizer of (\ref{divaneasik}). Theorem \ref{qual} and Lemma \ref{tugish} imply that the infimum in (\ref{divaneasik}) is attained. Therefore, the probability measure that any minimizer of (\ref{divaneasik}) induces on $U^{\mathbb{N}}$ is equal to $\bar{\mu}_{\xi}^\infty$. This implies that $\bar{\mu}_{\xi}^\infty$ corresponds to a transient process with stationary and ergodic increments, and $\mu_{\xi}^\infty$ (which is defined in the statement of Theorem \ref{sevval}) is the unique minimizer of (\ref{divaneasik}).
\end{proof}
\begin{remark}
The argument above indirectly proves that $\mu_\xi^\infty\in\mathcal{E}$, and that $m(\mu_\xi^\infty)=\xi$. These facts are also easy to show directly using Definition \ref{definemuyuanan}. In fact, $\mu_\xi^\infty$ is mixing with rate given by the tail behaviour of $\tau_1$.
\end{remark}

\section*{Acknowledgments}

I thank S.\ R.\ S.\ Varadhan for valuable discussions. I also thank J.\ Peterson, O.\ Zeitouni and an anonymous referee for pointing out that all of the results and proofs in this paper are valid under condition (\textbf{T}) which is strictly weaker than what I was assuming in an earlier version, namely Kalikow's condition.

\end{document}